\theoremstyle{plain}             
\newtheorem{theorem}{Theorem}[section]
\newtheorem{lemma}[theorem]{Lemma}
\theoremstyle{definition}
\newtheorem{remark}[theorem]{Remark}
\numberwithin{equation}{section}
\def\protectbold#1{\protect{\boldmath{$#1$}}}
\def\eqref#1{(\ref{#1})}
\def\dsp{\displaystyle}
\def\Frac#1#2{\frac
{
 {\raise.6ex
 \hbox{$\displaystyle#1$}}
}
{
 {\lower.6ex
 \hbox{$\displaystyle#2$}}
 }
}
\def\bigOxe{\sqcup \kern-2.3mm \sqcap}
\def\dsp{\displaystyle}
\def\Frac#1#2{\frac
{
 {\raise.6ex
 \hbox{$\displaystyle#1$}}
}
{
 {\lower.6ex
 \hbox{$\displaystyle#2$}}
 }
}
\def\CHF#1#2#3{
{}_1F_1\left(
\begin{array}{c}
\begin{array}{cc} \hskip-10pt#1 \end{array}\\
\begin{array}{c}  \hskip-10pt#2 \end{array}
\end{array}
\hskip-8pt;\,#3
\right)}
\def\CHFs#1#2#3{
{}_1F_1\left({a};{c};{z}\right)
}
\def\bigO{{\cal O}}
\def\ph{{\rm ph}}
\def\wt{\widetilde}
\def\tfrac#1#2{{{\lower.6ex
\hbox{$\scriptstyle#1$}}\over 
{\raise.7ex
\hbox{$\scriptstyle#2$}}}}
\def\CC{\mathbb C}
\def\RR{\mathbb R}
\def\ZZ{\mathbb Z}
\def\phase{{\rm ph}}
\def\wt{\widetilde}
\def\tfrac#1#2{{{\lower.6ex
\hbox{$\scriptstyle#1$}}\over 
{\raise.7ex
\hbox{$\scriptstyle#2$}}}}
\begin{document}
 \title{Complete asymptotic expansions for the relativistic Fermi-Dirac integral}


\author{
A. Gil\\
Departamento de Matem\'atica Aplicada y CC. de la Computaci\'on.\\
ETSI Caminos. Universidad de Cantabria. 39005-Santander, Spain.\\
 \and
J. Segura\\
        Departamento de Matem\'aticas, Estadistica y 
        Computaci\'on,\\
        Univ. de Cantabria, 39005 Santander, Spain.\\
\and
N.M. Temme\\
IAA, 1825 BD 25, Alkmaar, The Netherlands.\footnote{Former address: Centrum Wiskunde \& Informatica (CWI), 
        Science Park 123, 1098 XG Amsterdam,  The Netherlands}\\
}

\maketitle
\begin{abstract}
Fermi-Dirac integrals appear in problems in nuclear astrophysics, solid state physics or in the fundamental theory of semiconductor modeling, among others areas of application. In this paper, we give new and complete asymptotic expansions for the relativistic Fermi-Dirac integral.  These expansions could be useful to obtain a correct qualitative understanding of Fermi systems.  The performance of the expansions is illustrated with numerical examples. 
\end{abstract}

\vskip 0.5cm \noindent
{\small
2000 Mathematics Subject Classification: 33E20, 41A60, 65D20.
\par\noindent
Keywords \& Phrases:
Relativistic Fermi-Dirac integral, asymptotic expansions, confluent hypergeometric functions.
}

\section{Introduction}\label{sec:intro}

Fermi-Dirac integrals play a key role in different problems in applied and theoretical physics. For a few examples in stellar astrophysics, plasma physics and electronics, see  \cite{Barletti:2012:DIQ}, \cite{Bludman:1977:EFD}, \cite{Misiaszek:2006:NSH}, \cite{Fausssurier:2016:ERF}, \cite{Selvaggi:2018:QCG} and \cite{Yadab:2019:WFL}.

We use the following notations. The classical nonrelativistic Fermi-Dirac integral is defined as
\begin{equation}\label{eq:int01}
F_q(\eta)=\int_0^\infty \frac{x^q}{e^{x-\eta}+1}\,dx,\quad  q\ge0,\quad  \eta\in\RR,
\end{equation}
and the  relativistic integral is 
\begin{equation}\label{eq:int02}
F_q(\eta,\beta)=\int_0^\infty \frac{x^q\sqrt{1+\beta x/2}}{e^{x-\eta}+1}\,dx,\quad \beta\ge0,\quad q\ge0,\quad  \eta\in\RR.
\end{equation}

It is interesting to note that of particular importance in numerous applications are the cases where $q$ 
is a positive integer or $q=\frac12, \frac32,\frac52,\ldots$ (some fundamental thermodynamic variables in 
a Fermi-Dirac gas are, for example, expressed in terms of such integrals). 
Several analytical and numerical studies of the nonrelativistic and relativistic Fermi-Dirac integrals can be found in the literature; 
among others,  \cite{Mohankumar:2016:OTV}, \cite{Khvorostukhin:2015:RFD}, \cite{Fukushima:2015:PAC}, \cite{Fukushima:2014:ACG},
\cite{Bhagat:2003:OTE}, \cite{Miralles:1996:ApJS}, \cite{Gautschi:1993:OTC}, \cite{Sagar:1991:AGQ} and \cite{Pichon:1989:NCO}.

An extensive study of these integrals can be found in Chapter~24 of \cite{Cox:1968:PSS}. In many of these publications the case of half-integer values of $q$ is considered. In our approach we derive results for general values of $q$, and we find that half-integer values of $q$ need special analytical forms of the asymptotic results.

In the present paper, we take $q$ as a fixed parameter and we  derive new asymptotic expansions for large values of $\eta$ or $\beta$ of the relativistic 
Fermi-Dirac integral $F_q(\eta,\beta)$ defined in \eqref{eq:int02}. 
The new expansions are complete in the sense that all  coefficients of the infinite expansions are defined in terms of computable analytic expressions.

Numerical examples illustrate the performance of the expansions. As mentioned in \cite{Garoni:2001:CAE}, complete asymptotic expansions of the Fermi-Dirac functions are important in order to have a correct understanding of Fermi systems. 
 
\section{Properties of the relativistic Fermi-Dirac integral}\label{sec:prop}
First we take $\eta<0$, then we can expand
\begin{equation}\label{eq:prop01}
\frac{1}{e^{x-\eta}+1}=\frac{e^{-x+\eta}}{{e^{-x+\eta}+1}}=\sum_{n=1}^\infty (-1)^{n-1} e^{-n(x-\eta)},
\end{equation}
and obtain
\begin{equation}\label{eq:prop02}
F_q(\eta,\beta)=\sum_{n=1}^\infty (-1)^{n-1}e^{n\eta} \int_0^\infty e^{-nx} x^q\sqrt{1+\beta x/2}\,dx,\quad \eta <0.
\end{equation}
We use the integral representation of the Kummer function (for details on Kummer functions we refer to  \cite{Olde:2010:CHF})
\begin{equation}\label{eq:prop03}
U(a,b,z)=\frac{1}{\Gamma(a)}\int_0^\infty e^{-zt} t^{a-1}(1+t)^{b-a-1}\,dt,\quad \Re a>0, \quad  \Re z >0,
\end{equation}
and obtain
\begin{equation}\label{eq:prop04}
F_q(\eta,\beta)=\left(\frac{2}{\beta}\right)^{q+1}\Gamma(q+1)\sum_{n=1}^\infty (-1)^{n-1}e^{n\eta} 
U\left(q+1,q+\tfrac52,\frac{2n}{\beta}\right),\quad \eta <0.
\end{equation}

An asymptotic expansion of the Kummer function for large values of $z$ is
\begin{equation}\label{eq:prop05}
U(a,b,z)\sim z^{-a}\sum_{k=0}^\infty\frac{(a)_k\left(a-b+1\right)_k}{k!}(-z)^{-k},\quad \vert \phase\, z\vert \le \tfrac32\pi-\delta,
\end{equation}
where $\delta$ is a small positive number. We introduce
\begin{equation}\label{eq:prop06}
U_q(s,\beta)=\left(\frac{2s}{\beta}\right)^{q+1}U\left(q+1,q+\tfrac52,\frac{2s}{\beta}\right)=\left(\frac{2s}{\beta}\right)^{-\frac12}U\left(-\tfrac12,-q-\tfrac12,\frac{2s}{\beta}\right),
\end{equation}
where $s\ne0$, $ \vert \phase\,s\vert < \pi$, $\beta>0$, with limiting value $U_q(s,0)=1$. 

We summarise the result for negative values of $\eta$ in the following lemma.
\begin{lemma}\label{lem:01}
We have the convergent expansion
\begin{equation}\label{eq:prop07}
F_q(\eta,\beta)=\Gamma(q+1)\sum_{n=1}^\infty (-1)^{n-1}\frac{e^{n\eta} }{n^{q+1}}U_q(n,\beta), \quad \eta <0,
\end{equation}
in which $U_q(n,\beta)=1+\bigO(1/n)$ for large values of $n$.
\end{lemma}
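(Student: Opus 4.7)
The derivation is essentially laid out in (2.1)--(2.6); the plan is to justify each step carefully and then read off the tail behaviour of $U_q(n,\beta)$. First, since $\eta<0$ one has $e^{\eta-x}\le e^\eta<1$ for all $x\ge 0$, so the geometric expansion (2.1) converges absolutely and uniformly on $[0,\infty)$. Substituting into (1.2) and exchanging the order of summation and integration yields (2.2). The interchange is the only mildly delicate point and can be justified by dominated convergence: each inner integral $\int_0^\infty e^{-nx}x^q\sqrt{1+\beta x/2}\,dx$ scales (via $y=nx$) as $n^{-q-1}$ times a factor that is uniformly bounded in $n\ge 1$, so the series is majorised by $\sum e^{n\eta} n^{-q-1}$, which converges because $e^\eta<1$.

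Second, for $\beta>0$ the inner integral is brought into Kummer form by the substitution $t=\beta x/2$:
\begin{equation*}
\int_0^\infty e^{-nx}x^q\sqrt{1+\beta x/2}\,dx=\left(\frac{2}{\beta}\right)^{q+1}\int_0^\infty e^{-(2n/\beta)t}t^q(1+t)^{1/2}\,dt.
\end{equation*}
Matching parameters in the representation (2.3) forces $a=q+1$ and $b-a-1=\tfrac12$, hence $b=q+\tfrac52$, so the right-hand side equals $(2/\beta)^{q+1}\Gamma(q+1)\,U(q+1,q+\tfrac52,2n/\beta)$, which is exactly (2.4). Inserting the definition (2.6), in the form $U(q+1,q+\tfrac52,2n/\beta)=(\beta/(2n))^{q+1}U_q(n,\beta)$, collapses the $(2/\beta)^{q+1}$ prefactor with $(\beta/(2n))^{q+1}$ into $1/n^{q+1}$, and delivers the advertised expansion (2.7). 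The boundary case $\beta=0$ is handled by the stated limiting value $U_q(s,0)=1$, consistent with the elementary evaluation $\int_0^\infty e^{-nx}x^q\,dx=\Gamma(q+1)/n^{q+1}$.

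Finally, for the tail statement $U_q(n,\beta)=1+\bigO(1/n)$, I would invoke the large-$z$ expansion (2.5) with $a=q+1$, $b=q+\tfrac52$, and $z=2n/\beta$. The leading term $z^{-a}$ multiplied by the prefactor $(2n/\beta)^{q+1}=z^{a}$ in (2.6) gives precisely $1$, and the $k=1$ correction contributes $(a)_1(a-b+1)_1/(-z)=\bigO(1/n)$ with an implicit constant depending only on $q$ and $\beta$. All other pieces of the argument are routine bookkeeping; the main point to be careful about is ensuring that the termwise integration in the first step is legitimate, which the uniform bound above settles.
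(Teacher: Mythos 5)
Your proposal follows exactly the paper's own derivation: the geometric expansion \eqref{eq:prop01} for $\eta<0$, term-by-term integration to get \eqref{eq:prop02}, identification of the inner integral with the Kummer function via \eqref{eq:prop03} giving \eqref{eq:prop04}, the normalization \eqref{eq:prop06} collapsing the prefactors into $n^{-q-1}$, and the large-$z$ expansion \eqref{eq:prop05} for the estimate $U_q(n,\beta)=1+\bigO(1/n)$. Your added justifications (uniform convergence of the geometric series and the Fubini-type majorization for the interchange) are correct details that the paper leaves implicit, so the argument is sound and essentially identical to the paper's.
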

The asymptotic estimate of the positive function $U_q(n,\beta)$ follows from \eqref{eq:prop05} and  \eqref{eq:prop06}. Hence, $U_q(n,\beta)$ is a slowly varying part of the terms of the expansion in \eqref{eq:prop07}, in particular for larger values of $n$.  

As in \cite{Temme:1990:UFD} (see also  \cite[p.~39]{Dingle:1973:AET}) we  prove the following representation 
of $F_q(\eta,\beta)$ as a  contour integral.
\begin{lemma}\label{lem:02}
We can write the relativistic Fermi-Dirac integral in the form
\begin{equation}\label{eq:prop08}
F_q(\eta,\beta)=\frac{\Gamma(q+1)}{2i}\int_{c-i\infty}^{c+i\infty}\frac{e^{\eta s}}{s^{q+1}\sin(\pi s)}U_q(s,\beta)\,ds,
\end{equation}
where $U_q(s,\beta)$ is defined in \eqref{eq:prop06} and
\begin{equation}\label{eq:prop09}
q> 0, \quad 0<c<1,\quad \vert \Im \eta\vert <\pi.
\end{equation}
\end{lemma}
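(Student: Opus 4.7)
The plan is to derive \eqref{eq:prop08} from the convergent series in Lemma~\ref{lem:01} by the residue theorem, and then use analytic continuation in $\eta$ to extend the identity from $\eta<0$ to the strip $|\Im\eta|<\pi$.

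First I would assume $\eta<0$ and consider the meromorphic integrand
$$
f(s)\;=\;\frac{e^{\eta s}}{s^{q+1}\sin(\pi s)}\,U_q(s,\beta).
$$
Because of the branch point of $s^{q+1}$ at the origin, a branch cut is placed along the negative real axis; in the right half plane $\Re s>0$ the factor $1/\sin(\pi s)$ supplies simple poles at $s=1,2,3,\ldots$ with
$$
\operatorname*{Res}_{s=n}f(s)\;=\;\frac{(-1)^n e^{n\eta}}{\pi\,n^{q+1}}\,U_q(n,\beta),
$$
and no other singularities occur to the right of the vertical line $\Re s=c$, $0<c<1$.

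Next I would close the contour $\Re s=c$ to the right by a large semicircular arc of radius $R_N=N+\tfrac12$ (chosen to keep a uniform distance from the integer poles, so that $|1/\sin(\pi s)|$ stays bounded on the arc). I must verify that the integral over this arc tends to zero as $N\to\infty$. Three ingredients enter: (i) the standard bound $|\sin(\pi s)|\ge c_0 e^{\pi|\Im s|}$ on $|s|=N+\tfrac12$; (ii) the estimate $U_q(s,\beta)=1+O(1/|s|)$ coming from \eqref{eq:prop05}–\eqref{eq:prop06}, valid uniformly in the sector $|\phase s|\le\pi-\delta$; and (iii) the decay of $e^{\eta s}$, which for $\eta<0$ is exponential on the right half plane away from the imaginary axis, combined with the algebraic decay $|s|^{-q-1}$ since $q>0$. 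Applying the residue theorem (closing clockwise picks up an extra minus sign) gives
$$
\frac{1}{2i}\int_{c-i\infty}^{c+i\infty}f(s)\,ds
\;=\;-\pi\sum_{n=1}^{\infty}\operatorname*{Res}_{s=n}f(s)
\;=\;\sum_{n=1}^{\infty}\frac{(-1)^{n-1} e^{n\eta}}{n^{q+1}}\,U_q(n,\beta),
$$
which multiplied by $\Gamma(q+1)$ is exactly the series in \eqref{eq:prop07}. This proves \eqref{eq:prop08} for $\eta<0$.

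Finally I would extend to complex $\eta$ with $|\Im\eta|<\pi$ by analytic continuation. Along the line $s=c+it$ the factor $1/\sin(\pi s)$ decays like $e^{-\pi|t|}$, while $|e^{\eta s}|=e^{c\Re\eta}\,e^{-t\,\Im\eta}$, so the integrand is dominated by $e^{-(\pi-|\Im\eta|)|t|}|t|^{-q-1}$ and the contour integral defines an analytic function of $\eta$ in $|\Im\eta|<\pi$. The original integral \eqref{eq:int02} (or its analytic continuation in $\eta$ from $\eta<0$ via the series in Lemma~\ref{lem:01}, which converges in the half plane $\Re\eta<0$ and continues analytically into $|\Im\eta|<\pi$) is analytic in the same strip. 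Agreement on the real ray $\eta<0$ then forces equality throughout, giving \eqref{eq:prop08} under the stated hypotheses \eqref{eq:prop09}. The main obstacle is the uniform control on the semicircular arc: one must combine the sector bound on $U_q$, the $\sin(\pi s)$ lower bound on radii $N+\tfrac12$, and the exponential decay of $e^{\eta s}$ in the right half plane, and verify they work together for all $q>0$.
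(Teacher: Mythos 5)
Your proof is correct and follows essentially the same route as the paper's: both evaluate the contour integral for $\eta<0$ by moving the contour to the right across the poles at $s=1,2,3,\ldots$, identify the resulting residue series with the convergent expansion of Lemma~\ref{lem:01}, and then extend to the strip $\vert \Im \eta\vert <\pi$ by analytic continuation in $\eta$. The only difference is presentational: you close the contour with semicircular arcs of radius $N+1/2$ and spell out the decay estimates (the $\sin(\pi s)$ lower bound, $U_q(s,\beta)=1+\bigO(1/|s|)$, and the decay of $e^{\eta s}$) that the paper dismisses as easily verified.
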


\begin{proof}\label{proof:01}
We introduce 
\begin{equation}\label{eq:prop10}
\Phi_q(\eta,\beta)=\frac{\Gamma(q+1)}{2i}\int_{c-i\infty}^{c+i\infty}\frac{e^{\eta s}}{s^{q+1}\sin(\pi s)}U_q(s,\beta)\,ds,
\end{equation}
with the condition on $c$ and $\eta$ as in \eqref{eq:prop09}. First we assume that, in addition,  $\eta<0$, and  move the contour to the right, picking up the residues at $s=1,2,3,\ldots$. This gives
\begin{equation}\label{eq:prop11}
\begin{array}{r@{\,}c@{\,}l}
\Phi_q(\eta,\beta)&=&\dsp{-\frac{2\pi i}{2i}\Gamma(q+1)\sum_{n=1}^\infty\frac{e^{\eta n}}{n^{q+1}}
\left(\lim_{s\to n}\frac{s-n}{\sin(\pi s)}\right)U_q(n,\beta)}\\[8pt]
&=&\dsp{-\Gamma(q+1)\sum_{n=1}^\infty\frac{e^{\eta n}}{n^{q+1}}(-1)^nU_q(n,\beta),}
\end{array}
\end{equation}
which is the expansion in \eqref{eq:prop07} when $\eta < 0$. We have to prove that, when we take a finite sum of residues with the shifted contour integral, that this integral vanishes as $\Re \to\infty$, which can be verified easily.

It follows that $\Phi_q(\eta,\beta)$ of  \eqref{eq:prop08} and  \eqref{eq:prop10} is the same as  $F_q(\eta,\beta)$ of  \eqref{eq:int02}  and \eqref{eq:prop07} when $\eta<0$. But $\Phi_q(\eta,\beta)$ and   $F_q(\eta,\beta)$ of  \eqref{eq:int02}  are analytic functions of $\eta$ in the strip  $\vert \Im \eta\vert <\pi$.  This proves the lemma.
\end{proof}

A different proof follows from using a Mellin transform. We write $F_q(\eta,\beta)$ as
\begin{equation}\label{eq:prop12}
F(z)=\int_0^\infty \frac{x^q e^{-x}\sqrt{1+\beta x/2}}{z+e^{-x}}\,dx,\quad z=e^{-\eta},
\end{equation}
and take the Mellin transform with respect to $z$:
\begin{equation}\label{eq:prop13}
\wt{F}(s)=\int_0^\infty z^{s-1}F(z)\,dz.
\end{equation}
Using
\begin{equation}\label{eq:prop14}
\int_0^\infty \frac{z^{s-1}}{z+a}\,dz=\frac{\pi a^{s-1}}{\sin(\pi s)},\quad 0<\Re s<1,
\end{equation}
we find (see \eqref{eq:prop02}--\eqref{eq:prop04})
\begin{equation}\label{eq:prop15}
\wt{F}(s)=\frac{\pi}{\sin(\pi s)}\int_0^\infty e^{-xs} x^{q} \sqrt{1+\beta x/2}\,dx=\frac{\pi\Gamma(q+1)}{s^{q+1}\sin(\pi s)}U_q(s,\beta).
\end{equation}
Upon inverting the Mellin transform we find again \eqref{eq:prop08}.

For the standard Fermi-Dirac integral the integral representation in \eqref{eq:prop08} becomes
\begin{equation}\label{eq:prop16}
F_q(\eta)=\frac{\Gamma(q+1)}{2i}\int_{c-i\infty}^{c+i\infty}\frac{e^{\eta s}}{s^{q+1}\sin(\pi s)}\,ds,
\end{equation}
and the expansion in \eqref{eq:prop07} becomes the well-known result (\cite[p.~20]{Dingle:1973:AET},\cite{Garoni:2001:CAE})
\begin{equation}\label{eq:prop17}
F_q(\eta)=\Gamma(q+1)\sum_{n=1}^\infty (-1)^{n-1}\frac{e^{n\eta} }{n^{q+1}},\quad \eta <0.
\end{equation}

\begin{remark}\label{rem:rem01}
In \cite[\S24.7b]{Cox:1968:PSS} the expansion given in Lemma~\ref{lem:01}  is derived for $q=\frac12,\frac32,\ldots$, in which case the Kummer  functions $U_q(s,\beta)$, see \eqref{eq:prop06},  can be expressed in terms of modified Bessel functions $K_\nu(z)$. In addition, still with $\eta<0$, this reference gives expansions for large $\beta$
by expanding the $K$-Bessel functions for small values of the argument $z$.
\end{remark}

\section{Expansions for large values of \protectbold{\eta}}\label{sec:eta}
First we summarise a result for  $\beta=0$.
\begin{lemma}\label{lem:03}
We have from  \cite{Dingle:1957:FDI}, \cite[p.~20]{Dingle:1973:AET} and  \cite{Garoni:2001:CAE}
\begin{equation}\label{eq:asy01}
F_q(\eta)\sim \Gamma(q+1)\left(\eta^{q+1}\sum_{n=0}^\infty\frac{\tau_{2n}}{\Gamma(q+2-2n)\,\eta^{2n}}+\cos(\pi q)F_q(-\eta)\right),
\end{equation}
as $\eta\to\infty$, $\vert\phase\,\eta\vert<\frac12\pi$. The coefficients are defined by
\begin{equation}\label{eq:asy02}
\frac{\pi s}{\sin(\pi s)}=\sum_{n=0}^\infty \tau_{2n}s^{2n},\quad \vert s\vert < 1.
\end{equation}
We have $\tau_0=1$, $\tau_2=\frac16\pi^2$, $\tau_4=\frac{7}{360}\pi^4$, and in general
\begin{equation}\label{eq:asy03}
\tau_{2n}=2\sum_{m=1}^\infty\frac{(-1)^{m-1}}{m^{2n}}=2\left(1-2^{1-2n}\right)\zeta(2n)=(-1)^{n-1}\left(1-2^{1-2n}\right)\frac{(2\pi)^{2n}}{(2n)!}B_{2n},
\end{equation}
for $n\ge1$, where $B_n$ are the Bernoulli numbers. 
\end{lemma}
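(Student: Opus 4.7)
The plan is to use the Mellin--Barnes representation of $F_q(\eta)$ obtained from Lemma~\ref{lem:02} at $\beta=0$, namely equation~\eqref{eq:prop16}, and to deform the vertical contour $L$ at $\Re s=c\in(0,1)$ to the left. The singularities encountered are the branch point of $s^{-q-1}$ at $s=0$ and the poles of $1/\sin(\pi s)$ at $s=-1,-2,\ldots$; the former produces the algebraic part of the expansion while the latter is responsible for the exponentially small correction $\cos(\pi q)F_q(-\eta)$. This is the mirror image, for $\Re\eta>0$, of the rightward deformation used in the proof of Lemma~\ref{lem:02}.

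First I would place the branch cut of $s^{-q-1}$ along $(-\infty,0]$ and deform $L$ into a modified Hankel contour $\calH$ that hugs the cut from above and below, loops once around $s=0$ at small radius $\epsilon$, and indents around each pole $s=-n$ via a small semicircle (above the real axis on the upper side, below on the lower side). Since $|1/\sin(\pi s)|=O(e^{-\pi|\Im s|})$ on vertical strips and $|e^{\eta s}|$ is bounded on compacta, the horizontal connectors at $|\Im s|=\infty$ vanish and the deformation is justified by Cauchy's theorem in the domain of analyticity.

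Next, for the algebraic piece, I would use the convergent series \eqref{eq:asy02} inside $|s|<1$ to write $1/\sin(\pi s)=(\pi s)^{-1}\sum_{n\ge 0}\tau_{2n}s^{2n}$, substitute into the loop around the branch point, and apply the Hankel identity
\begin{equation*}
\frac{1}{2\pi i}\oint_{\calH_0}\frac{e^{\eta s}}{s^{\alpha}}\,ds=\frac{\eta^{\alpha-1}}{\Gamma(\alpha)}
\end{equation*}
term by term. After multiplying by the prefactor $\Gamma(q+1)/(2i)$ of \eqref{eq:prop16}, this reproduces the algebraic series
\begin{equation*}
\Gamma(q+1)\eta^{q+1}\sum_{n=0}^{\infty}\frac{\tau_{2n}}{\Gamma(q+2-2n)\,\eta^{2n}}.
\end{equation*}
To make this Poincar\'e-asymptotic rather than formal, I would truncate at order $N$, push the residual loop out to $\Re s=-N-\tfrac12$, and bound the tail using $|e^{\eta s}|\le e^{-(N+\frac12)\Re\eta}$ together with the polynomial growth of $\tau_{2n}$ and the $O(1)$ size of $1/\sin(\pi s)$ away from the poles; this gives an error of order $\eta^{q-1-2N}$.

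For the exponentially small correction, each pair of indentations (above and below) at a pole $s=-n$ combines to a full loop encircling the pole, but the branch factor $s^{-q-1}$ takes the different boundary values $n^{-q-1}e^{\mp i\pi(q+1)}$ on the two sides of the cut. Averaging these and using $\sin(\pi s)\sim(-1)^n\pi(s+n)$, the effective residue at $s=-n$ acquires the factor $e^{i\pi(q+1)}+e^{-i\pi(q+1)}=-2\cos(\pi q)$; summing over $n\ge 1$ and invoking Lemma~\ref{lem:01} at argument $-\eta$ with $\beta=0$ recovers the $\cos(\pi q)F_q(-\eta)$ term of~\eqref{eq:asy01}. The hardest part will be precisely this step: the poles of $1/\sin(\pi s)$ lie on the branch cut of $s^{-q-1}$, so the residues are not defined by the standard theorem, and careful sign and orientation bookkeeping is required so that the two boundary values of $s^{-q-1}$ combine into a sum (giving $\cos(\pi q)$) rather than a difference (which would give $\sin(\pi q)$). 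This also explains, en passant, why half-integer $q$ yields no such correction, since then $\cos(\pi q)=0$.
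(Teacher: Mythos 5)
Your route is, in substance, the route the paper itself relies on: the paper offers no proof of Lemma~\ref{lem:03} (it is quoted from \cite{Dingle:1957:FDI}, \cite{Dingle:1973:AET} and \cite{Garoni:2001:CAE}), and the machinery you describe --- the Mellin--Barnes representation \eqref{eq:prop16}, leftward deformation onto a Hankel loop for the algebraic series, and the poles of $1/\sin(\pi s)$ sitting on the branch cut for the exponentially small series --- is exactly the machinery the paper invokes for the analogous Lemma~\ref{lem:04} (Watson's lemma for loop integrals plus the Dingle--Garoni treatment of the poles). Your delicate step is also conceived correctly: both indentations at $s=-n$ are traversed in the same (counterclockwise) sense, so their contributions \emph{add}, each being $i\pi$ times a one-sided residue, and the boundary values $n^{-q-1}e^{\mp i\pi(q+1)}$ of $s^{-q-1}$ then produce the factor $e^{i\pi(q+1)}+e^{-i\pi(q+1)}=-2\cos(\pi q)$, as you assert.

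Two points, however, need repair. First, your computation, carried honestly to the end, does \emph{not} reproduce \eqref{eq:asy01} as printed. With $\sin(\pi s)\approx(-1)^n\pi(s+n)$ and the prefactor $\Gamma(q+1)/(2i)$ of \eqref{eq:prop16}, the pole at $s=-n$ contributes $\Gamma(q+1)\cos(\pi q)(-1)^{n-1}e^{-n\eta}n^{-q-1}$, so by \eqref{eq:prop17} the total pole contribution is $\cos(\pi q)F_q(-\eta)$ \emph{added to} the algebraic series:
\[
F_q(\eta)\sim \Gamma(q+1)\,\eta^{q+1}\sum_{n=0}^\infty\frac{\tau_{2n}}{\Gamma(q+2-2n)\,\eta^{2n}}\;+\;\cos(\pi q)\,F_q(-\eta),
\]
whereas \eqref{eq:asy01} places $\cos(\pi q)F_q(-\eta)$ inside the parentheses multiplied by $\Gamma(q+1)$. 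Your derivation is the correct one: the cited sources state the result for the normalized function $F_q(\eta)/\Gamma(q+1)$, and transcribing it to the unnormalized $F_q$ of \eqref{eq:int01} must leave the exponentially small term without the extra $\Gamma(q+1)$. (A concrete check: for $q=2$ the algebraic series terminates and trilogarithm inversion gives exactly $F_2(\eta)=\eta^3/3+\pi^2\eta/3+F_2(-\eta)$, not $+\Gamma(3)F_2(-\eta)$.) You should have flagged this discrepancy rather than asserting agreement. Second, your error bound for the algebraic part fails as stated: the remainder $\pi s/\sin(\pi s)-\sum_{n=0}^{N}\tau_{2n}s^{2n}$ retains the poles at $s=-1,-2,\ldots$, so the ``residual loop'' cannot be pushed out to $\Re s=-N-\tfrac12$; doing so crosses those poles and double-counts contributions already assigned to the exponentially small series. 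The correct repair is the one the paper quotes from Olver in the proof of Lemma~\ref{lem:04}: keep the loop of the Hankel contour inside $|s|\le\rho<1$, where the Taylor remainder is $O(|s|^{2N+2})$, and bound the entire contribution of the portion of the contour with $|s|\ge\rho$ by $O\left(e^{-\rho\Re\eta}\right)$ using the decay of $e^{\eta s}$ along the cut; this yields the announced error $O\left(\eta^{q-1-2N}\right)$. With these two corrections your argument is a complete proof of (the corrected form of) the lemma.
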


The expansion in \eqref{eq:asy01} has been studied in detail in \cite{Garoni:2001:CAE}, where in particular the role of term $\cos(\pi q)F_q(-\eta)$ has been explained. Observe that this term vanishes if $q=\frac12,\frac32,\ldots$, and that, when $q$ assumes other values,  it gives an exponentially small contribution compared to terms of the series in \eqref{eq:asy01}.  For the convergent series expansion of $F_q(-\eta)$ for $\eta > 0$ we refer to \eqref{eq:prop17}.

For $\eta <0$  we have given in Lemma~\ref{lem:01} a convergent expansion of $F_q(\eta,\beta)$. For numerical application we can use that expansion for, say, $\eta\le-\frac12$.

For large positive positive values of  $\eta$  we use the integral representation given in  Lemma~\ref{lem:02}, equation \eqref{eq:prop08}, written as
\begin{equation}\label{eq:asy04}
F_q(\eta,\beta)=\frac{\Gamma(q+1)}{2\pi i}\int_{c-i\infty}^{c+i\infty}\frac{e^{\eta s}}{s^{q+2}}f(s)\,ds,\quad 
f(s)=\frac{\pi s}{\sin(\pi s)}U_q(s,\beta).
\end{equation}
For the asymptotic behaviour of a $F_q(\eta,\beta)$ we need information about $f(s)$ near the origin.
This means that we need to express $U_q(s,\beta)$  in terms of the ${}_1F_1$-function. 

We use  the relation (see \cite[Eqn.~13.2.42]{Olde:2010:CHF})
\begin{equation}\label{eq:asy05}
U(a,b,z)=\frac{\Gamma(1-b)}{\Gamma(a-b+1)}\CHF{a}{b}{z}+\frac{\Gamma(b-1)}{\Gamma(a)}\CHF{a-b+1}{2-b}{z},
\end{equation}
and with \eqref{eq:prop06}  we obtain for $q\ne \frac12,\frac32,\frac52,\ldots$ and $\beta>0$
\begin{equation}\label{eq:asy06}
\begin{array}{r@{\,}c@{\,}l}
U_q(s,\beta)&=&\dsp{\left(\frac{2s}{\beta}\right)^{q+1}\frac{\Gamma\left(-q-\frac32\right)}{\Gamma\left(-\frac12\right)}
\CHF{q+1}{q+\frac52}{\frac{2s}{\beta}}\ +}\\[8pt]
&&\dsp{\left(\frac{2s}{\beta}\right)^{-\frac12}\frac{\Gamma\left(q+\frac32\right)}{\Gamma(q+1)}
\CHF{-\frac12}{-q-\frac12}{\frac{2s}{\beta}}.}
\end{array}
\end{equation}
We introduce the functions
\begin{equation}\label{eq:asy07}
\begin{array}{r@{\,}c@{\,}l}
f_1(s)&=&\dsp{\frac{\pi s}{\sin(\pi s)}
\CHF{q+1}{q+\frac52}{\frac{2s}{\beta}},}\\[8pt]
f_2(s)&=&\dsp{\frac{\pi s}{\sin(\pi s)}
\CHF{-\frac12}{-q-\frac12}{\frac{2s}{\beta}}.}
\end{array}
\end{equation}
Thus, we can write, for $q\ne \frac12,\frac32,\frac52,\ldots$,
\begin{equation}\label{eq:asy08}
\begin{array}{r@{\,}c@{\,}l}
F_q(\eta,\beta)&=&\dsp{\left(\frac{2}{\beta}\right)^{q+1}\frac{\Gamma\left(-q-\frac32\right)\Gamma(q+1)}{\Gamma\left(-\frac12\right)} F_q^{(1)}(\eta,\beta) \ +}\\[8pt]
&&\quad\quad \dsp{\left(\frac{2}{\beta}\right)^{-\frac12}\Gamma\left(q+\tfrac32\right)F_q^{(2)}(\eta,\beta),}\\[8pt]
F_q^{(1)}(\eta,\beta)&=&\dsp{\frac{1}{2\pi i}\int_{c-i\infty}^{c+i\infty}e^{\eta s}f_1(s)\,\frac{ds}{s},}\\[8pt]
F_q^{(2)}(\eta,\beta)&=&\dsp{\frac{1}{2\pi i}\int_{c-i\infty}^{c+i\infty}e^{\eta s}f_2(s)\,\frac{ds}{s^{q+\frac52}}.}
\end{array}
\end{equation}

The asymptotic expansions of these functions are given in the following lemma.
\begin{lemma}\label{lem:04}
For fixed $\beta$ and $q$, $q\ne \frac12,\frac32,\frac52,\ldots$ we have the expansion
\begin{equation}\label{eq:asy09}
F_q^{(1)}(\eta,\beta)=
\sum_{n=0}^\infty(-1)^ne^{-n\eta}\CHF{q+1}{q+\frac52}{\frac{-2n}{\beta}},
\end{equation}
which converges for all $\eta>0$, and
\begin{equation}\label{eq:asy10}
\begin{array}{r@{\,}c@{\,}l}
F_q^{(2)}(\eta,\beta)&\sim&\dsp{ \eta^{q+\frac32}\sum_{n=0}^\infty \frac{1}{\Gamma\left(q+\frac52-n\right)}\frac{a_n}{\eta^n}\ +}
\\[8pt]
&&\dsp{\sin(\pi q)
\sum_{n=1}^\infty (-1)^{n}\frac{e^{-n\eta}}{n^{q+1}}\CHF{-\frac12}{-q-\frac12}{\frac{-2n}{\beta}},}
\end{array}
\end{equation}
where the first series is an asymptotic expansion for $\eta\to+\infty$ and the second one converges for all $\eta>0$. The coefficients $a_n$ are defined by the expansion
\begin{equation}\label{eq:asy11}
f_2(s)=\sum_{n=0}^\infty a_n s^n, \quad \vert s\vert <1,
\end{equation}
with $f_2(s)$ defined in \eqref{eq:asy07}.
The first coefficients are
\begin{equation}\label{eq:asy11a}
\begin{array}{r@{\,}c@{\,}l}
a_0&=&\dsp{1,\quad a_1= \frac{2}{\beta(1+2q)},\quad a_2=\frac{\tau_2\beta^2(4q^2-1)-2}{\beta^2(4q^2-1)},}\\[8pt]
a_3&=&\dsp{2\frac{\beta^2\tau_2(2q-1)(2q-3)+2}{\beta^3(4q^2-1)(2q-3)},}
\end{array}
\end{equation}
where the coefficients $\tau_{2n}$ are defined in \eqref{eq:asy03}.
\end{lemma}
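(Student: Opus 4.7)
The plan is to prove the two expansions separately by contour-deformation arguments, leveraging the branch structure of the Mellin--Barnes integrands in \eqref{eq:asy08}. For the convergent expansion of $F_q^{(1)}$, I would observe that the integrand $e^{\eta s}f_1(s)/s$ is meromorphic on $\mathbb{C}$, with a simple pole at $s=0$ (because $f_1(0)=1$) and simple poles at every nonzero integer $s=\pm k$, inherited from the factor $\pi s/\sin(\pi s)$ in $f_1$. For $\eta>0$ I close the vertical contour to the left by a large semicircle; its contribution vanishes in the limit since $e^{\eta s}$ decays exponentially, while the remaining factors grow at most polynomially on circles $|s|=N+\tfrac12$ (chosen to skirt the integer poles) and ${}_1F_1(q+1;q+\tfrac52;2s/\beta)$ is at most of polynomial growth for $\Re s<0$. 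Collecting residues at $s=0,-1,-2,\ldots$ and using $\mathrm{Res}_{s=-n}[\pi/\sin(\pi s)]=(-1)^n$ then yields exactly \eqref{eq:asy09}. Convergence of this series for all $\eta>0$ follows from the standard large-argument asymptotic ${}_1F_1(q+1;q+\tfrac52;-2n/\beta)=O(n^{-q-1})$, making the sum absolutely convergent term by term.

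For $F_q^{(2)}$ the integrand carries, in addition to the same sequence of simple poles, a branch cut emanating from $s=0$ due to the factor $s^{-q-5/2}$ (for $q$ outside $\{\tfrac12,\tfrac32,\ldots\}$). I would deform the vertical line into a Hankel-type contour $\mathcal{H}$ that wraps the negative real axis around the cut. Substituting the local expansion $f_2(s)=\sum_n a_n s^n$ (convergent for $|s|<1$) into the Hankel integral and applying the standard identity
\[
\frac{1}{2\pi i}\oint_{\mathcal{H}} e^{\eta s}\,s^{n-q-5/2}\,ds \;=\; \frac{\eta^{q+3/2-n}}{\Gamma(q+5/2-n)}
\]
would give, after a Watson's-lemma type truncation-plus-remainder argument, the first (asymptotic) sum in \eqref{eq:asy10}. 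The second (convergent) sum then arises from the residue contributions at the poles $s=-n$ that are crossed when deforming the vertical line to $\mathcal{H}$: each such contribution pairs $\mathrm{Res}_{s=-n}[f_2]=(-1)^{n-1}n\,{}_1F_1(-\tfrac12;-q-\tfrac12;-2n/\beta)$ with the boundary values of $s^{-q-5/2}$ across the cut, whose discontinuity is responsible for the trigonometric factor $\sin(\pi q)$ through identities such as $\sin(\pi(q+\tfrac52))=\cos(\pi q)$.

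The hard part will be that the poles $s=-n$ lie \emph{on} the branch cut of $s^{-q-5/2}$, so their residues are not defined in the naive sense. I would resolve this either by assigning half-residues from each side of the cut (equivalently, indenting $\mathcal{H}$ both above and below each pole by small semicircles and summing the two contributions), or by temporarily tilting the cut to the ray $\arg s=-\pi+\alpha$ with $\alpha>0$ small, computing the now well-defined residues, and passing to the limit $\alpha\to 0^+$. A secondary technical point is the Watson's-lemma step for the first sum: one truncates the Taylor series of $f_2$ at order $N$, bounds the remainder integral on $\mathcal{H}$ using the analyticity of $f_2$ on $\{|s|<1\}$ together with the exponential decay of $e^{\eta s}$ along the legs of $\mathcal{H}$, and verifies that the resulting error is $O(\eta^{q+3/2-N})$, matching the asymptotic scale of the retained terms.
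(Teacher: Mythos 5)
Your plan follows the same route as the paper's own proof: \eqref{eq:asy09} by closing the contour to the left and collecting the residues of $\pi/\sin(\pi s)$ at $s=0,-1,-2,\ldots$, and \eqref{eq:asy10} by deforming to a loop around the negative axis, applying Watson's lemma for loop integrals (the paper's \eqref{eq:asy12}--\eqref{eq:asy14}) to the Taylor coefficients $a_n$ of $f_2$, and reading off the exponentially small series from the poles sitting on the branch cut. The paper is in fact sketchier than you are: for the second series it simply refers to Dingle and to Garoni et al., whereas your half-residue/indentation prescription is exactly the right way to make that step precise; applied to \eqref{eq:prop16} it reproduces the term $\cos(\pi q)F_q(-\eta)$ of Lemma~\ref{lem:03}, which validates it. Two small repairs: the vanishing of the left semicircle for \eqref{eq:asy09} cannot rest on the decay of $e^{\eta s}$ alone, since that factor is $O(1)$ near the imaginary axis; there you also need $1/\sin(\pi s)=O(e^{-\pi\vert\Im s\vert})$, with the ${}_1F_1$ factor polynomially bounded in $\Re s\le 0$. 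Also, strictly speaking no poles are ``crossed'' when you pass to $\mathcal{H}$; they land \emph{on} the limiting contour, which is precisely the difficulty you then (correctly) identify and resolve.

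The substantive issue is what your computation will actually deliver. Your residue $\mathrm{Res}_{s=-n}f_2=(-1)^{n-1}n\,{}_1F_1\left(-\tfrac12;-q-\tfrac12;-2n/\beta\right)$ is correct; combining it with the mean of the two boundary values of $s^{-q-5/2}$ at $s=-n$, namely $n^{-q-5/2}\cos\left(\pi\left(q+\tfrac52\right)\right)=-n^{-q-5/2}\sin(\pi q)$, and with $e^{\eta s}\mapsto e^{-n\eta}$, the pole at $s=-n$ contributes
\[
(-1)^{n}\sin(\pi q)\,\frac{e^{-n\eta}}{n^{q+3/2}}\,{}_1F_1\left(-\tfrac12;-q-\tfrac12;\frac{-2n}{\beta}\right),
\]
so the exponentially small series comes out with $n^{q+3/2}$ in the denominator, not the $n^{q+1}$ printed in \eqref{eq:asy10}. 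This is not a defect of your method but an inconsistency in the statement as printed: as $\beta\to\infty$ one has $f_2(s)\to\pi s/\sin(\pi s)$ and $F_q^{(2)}(\eta,\beta)\to\widehat{F}_{q+1/2}(\eta)$ in the notation \eqref{eq:beta09}, and Lemma~\ref{lem:03} with $q$ replaced by $q+\tfrac12$ gives as exponentially small part $\cos\left(\pi\left(q+\tfrac12\right)\right)\sum_{n\ge1}(-1)^{n-1}e^{-n\eta}n^{-q-3/2}=\sin(\pi q)\sum_{n\ge1}(-1)^{n}e^{-n\eta}n^{-q-3/2}$, confirming the exponent $q+\tfrac32$; the missing $n^{-1/2}$ in \eqref{eq:asy10} is exactly the value at $s=-n$ of the factor $s^{-1/2}$ that distinguishes the relativistic weight from the standard one. (The discrepancy is numerically invisible in tests like Figure~\ref{fig:fig01}, because the dominant $n=1$ terms of the two versions coincide and the difference is $O(e^{-2\eta})$.) So carry out your plan as written: it will prove the lemma with the corrected power, and you should flag the misprint rather than force your answer to match \eqref{eq:asy10}.
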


\begin{figure}
\begin{center}
\includegraphics[width=13cm]{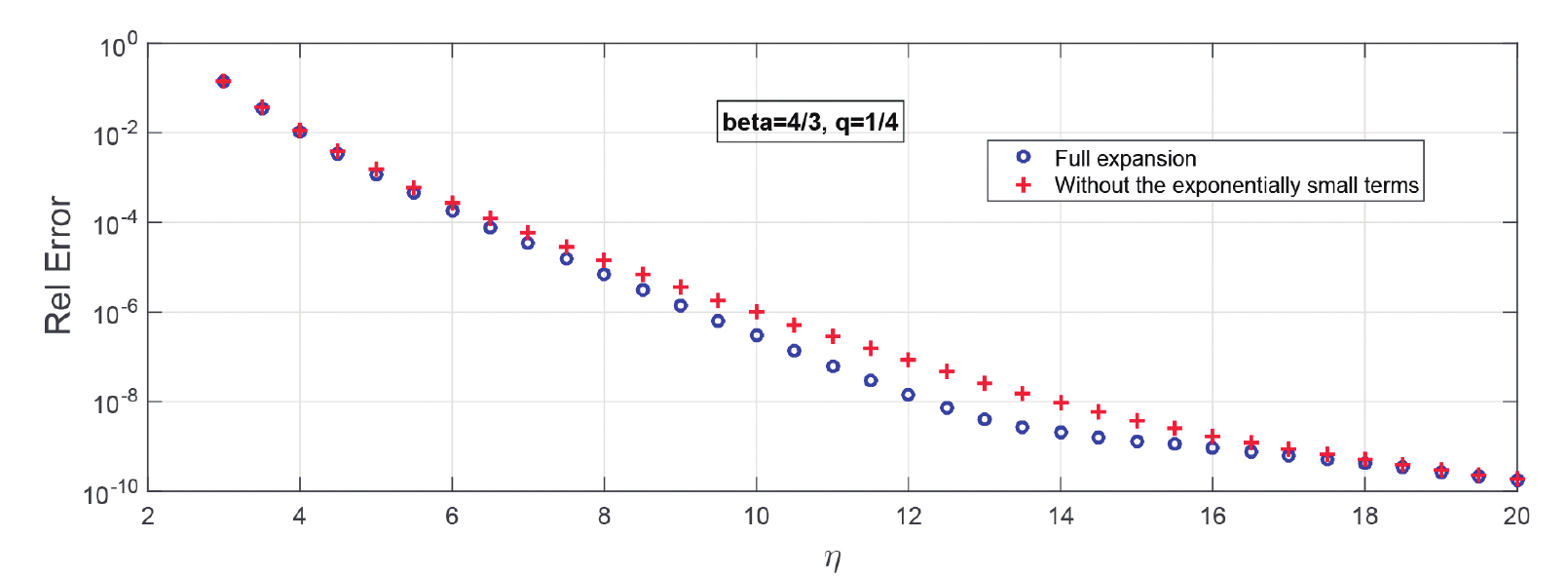}
\caption{
\label{fig:fig01} The blue set (circles) of the relative errors show better results in the interval $6\le \eta\le 16$ than the single asymptotic expansion in the first line of  \eqref{eq:asy10}. The results are obtained for $q=1/4$ and $\beta=4/3$.}
\end{center}
\end{figure}

In Figure~\ref{fig:fig01} we show two sets of data, the relative errors of the expansions in \eqref{eq:asy10} with or without  the series in the second line of the equation.  We see the effect of including the exponentially small convergent series in the interval $6\le \eta\le 16$: a better performance when we use the full expansion.  The expansions are compared to  values of the integral in  \eqref{eq:int02} calculated with the Matlab adaptive numerical integration function.

\begin{proof}
The expansion in \eqref{eq:asy09} follows from shifting the contour of the integral of $F_q^{(1)}(\eta,\beta)$ in \eqref{eq:asy08} to the left, and picking up the residues of the poles at $s=0,-1,-2,\ldots$.

The coefficients $a_n$ in the first series in \eqref{eq:asy10} follow from a Cauchy product of the coefficients $\tau_{2n}$ defined in \eqref{eq:asy02}-- \eqref{eq:asy03} and those of the Kummer function in $f_{2}(s)$, see \eqref{eq:asy07}).
When we compare the expansions in this lemma, we see that all terms in the second series in \eqref{eq:asy10} have exponentially small terms for $n>0$, whereas those in  the first series have negative powers of $\eta$.

The results of the lemma can be derived by using the approach of \cite{Dingle:1957:FDI} and \cite{Garoni:2001:CAE} for the standard Fermi-Dirac integral, and we sketch a few steps of their method.

The first series in \eqref{eq:asy10} can be obtained by using  Watson's lemma for loop integrals (see \cite[Page~16]{Olver:1997:ASF}), which says that the integral
\begin{equation}\label{eq:asy12}
G_\lambda(\eta)=\frac{1}{2\pi i}\int_{-\infty}^{(0+)} s^{\lambda-1}  e^{\eta s} f(s)\,ds
\end{equation}
can be expanded in the form
\begin{equation}\label{eq:asy13}
G_\lambda(\eta)\sim\sum_{n=0}^\infty\frac{1}{\Gamma(1-\lambda-n)} \frac{a_n}{\eta^{n+\lambda}},\quad \eta \to \infty,
\end{equation}
by using the loop integral of the reciprocal gamma function
\begin{equation}\label{eq:asy14}
\frac{1}{\Gamma(z)}=\frac{1}{2\pi i}\int_{-\infty}^{(0+)} s^{-z} e^s\,ds,\quad z\in\CC.
\end{equation}
The coefficients $a_n$ are those in the expansion $\dsp{f(s)=\sum_{n=0}^\infty a_n s^n}$. Olver assumes in  Watson's lemma for loop integrals that $f(s)$ in \eqref{eq:asy12} is analytic inside the loop around the negative axis. The function $f_2(s)$ defined in \eqref{eq:asy07} has simple poles at $s=-1,-2,-,3,\ldots$. However, just as in the standard Watson's lemma for Laplace-type integrals, only a small compact domain around the origin of the present function $f_2(s)$ is sufficient to obtain the first asymptotic series in \eqref{eq:asy10}. The   second series  in \eqref{eq:asy10} follows from the method of  \cite{Dingle:1957:FDI} and \cite{Garoni:2001:CAE},  and we refer to these papers for more details. For numerical purposes the second series is of no use for $\eta$ large, because of the exponentially small contributions compared with those of the first series in \eqref{eq:asy10}. A similar observation can be made about the term with $F_q(-\eta)$ in \eqref{eq:asy01}.
\end{proof}

\begin{remark}\label{rem:rem02}
In the literature the expansion in Lemma~\ref{lem:03} (without the term containing $F_q(-\eta)$) is also obtained by using Sommerfeld's Lemma; see \cite[Page~794]{Cox:1968:PSS}, \cite{Fukushima:2014:CG} and \cite{Fukushima:2014:ACG}. For the relativistic Fermi-Dirac integral \cite[Page~826]{Cox:1968:PSS} gives large-$\eta$ expansions, which are summarised in \cite{Pichon:1989:NCO} and \cite{Gong:2001:GFD}. These expansions are different from the ones given in Lemma~\ref{lem:04}, and are given for $q=\frac12,\frac32,\frac52$  in terms of a parameter $y$ defined by $1+y^2=(1+\eta\beta)^2$; $y$ should be bounded, with implies that $\beta=\bigO(1/\eta)$.
\end{remark}

\section{The case  \protectbold{q=\frac12, \frac32,\frac52,\ldots}}\label{sec:qhalf}
In this case the relation in \eqref{eq:asy06} of $U_q(s,\beta)$ in terms of the Kummer $F$-functions cannot be used, and we need the following representation  for $m=0,1,2,\ldots$ (see \cite[Eqn.~13.2.9]{Olde:2010:CHF})
\begin{equation}\label{eq:qh01}
\begin{array}{r@{\,}c@{\,}l}
U\left(a,m+1,z\right)&=&\dsp{\frac{(-1)^{m+1}}{m!\,\Gamma\left(a-m\right)}\sum_{k=0}^{%
\infty}\frac{{\left(a\right)_{k}}}{{k!\,\left(m+1\right)_{k}}}z^{k}\times }\\[8pt]
&&\dsp{\left(\ln z+%
\psi\left(a+k\right)-\psi\left(1+k\right)-\psi\left(m+k+1\right)\right)\ +} \\[8pt]
&&\dsp{\frac{%
1}{\Gamma\left(a\right)}\sum_{k=1}^{m}\frac{(k-1)!\,{\left(1-a+k\right)_{m-k}}}{%
(m-k)!}z^{-k},}
\end{array}
\end{equation}
where $\psi(z)=\Gamma^{\prime}(z)/\Gamma(z)$. This gives for $U_q(s,\beta)$ defined in \eqref{eq:prop06}, with $q=m-\frac32$,  $a=q+1=m-\frac12$,  and $z=(2s)/\beta$, the new form
\begin{equation}\label{eq:qh02}
U_q(s,\beta)=z^{m-\frac12}U\left(m-\tfrac12,m+1,z\right).
\end{equation}

For the representation of $F_q(\eta,\beta)$ in \eqref{eq:asy04} we write
\begin{equation}\label{eq:qh03}
\frac{1}{s^{q+2}}f(s)=\frac{\pi s}{\sin(\pi s)}\frac{U_q(s,\beta)}{s^{m+\frac12}}=\left(\frac{2}{\beta}\right)^{m-\frac12} \frac{\pi s}{\sin(\pi s)} \frac{U\left(m-\tfrac12,m+1,z\right)}{s}.
\end{equation}
Using \eqref{eq:qh01} we obtain
\begin{equation}\label{eq:qh04}
\begin{array}{r@{\,}c@{\,}l}
\dsp{U\left(m-\tfrac12,m+1,z\right)}&=&\dsp{A_m P_q(s,\beta)+A_m Q_q(s,\beta)+R_q(s,\beta)},\\[8pt]
P_q(s,\beta)&=&\dsp{\ln s\sum_{k=0}^\infty P_{m,k}s^{k},}\\[8pt]
Q_q(s,\beta)&=&\dsp{\sum_{k=0}^\infty Q_{m,k}s^{k},}\\[8pt]
R_q(s,\beta)&=&\dsp{\sum_{k=1}^{m}R_{m,k}s^{-k},}
\end{array}
\end{equation}
where
\begin{equation}\label{eq:qh05}
\begin{array}{r@{\,}c@{\,}l}
A_{m}&=&\dsp{\frac{(-1)^{m+1}}{m!\,\Gamma\left(-\frac12\right)},}\quad 
P_{m,k}=\dsp{\left(\frac{2}{\beta}\right)^{k}\frac{\left(m-\frac12\right)_k}{k!\,(m+1)_k},} \\[8pt]
Q_{m,k}&=&\dsp{P_{m,k}\left(\ln\left(\frac{2}{\beta}\right)+\psi\left(m-\tfrac12+k\right)-\psi\left(1+k\right)-\psi\left(m+k+1\right)\right),}\ \\[8pt]
R_{m,k}&=&\dsp{\left(\frac{2}{\beta}\right)^{-k}\frac{(k-1)!{\left(\frac32-m+k\right)_{m-k}}}{\Gamma\left(m-\frac12\right)(m-k)!}}.
\end{array}
\end{equation}
The two infinite series in \eqref{eq:qh04}  converge for all finite complex $s$.

After these preparations we can write for the product in the right-hand side of  \eqref{eq:qh03}
\begin{equation}\label{eq:qh06}
\begin{array}{r@{\,}c@{\,}l}
\dsp{\frac{\pi s}{\sin(\pi s)} \frac{U\left(m-\tfrac12,m+1,z\right)}{s}}&=&\dsp{A_m \left(\ln s\sum_{k=0}^\infty p_{m,k}s^{k-1}+\sum_{k=0}^\infty q_{m,k} s^{k-1}\right)\ +}\\[8pt]
 && \dsp{  \frac{\pi s}{\sin(\pi s)} \sum_{k=1}^{m}R_{m,k}s^{-k-1},}
\end{array}
 \end{equation}
where the coefficients $p_{m,k}$ and  $q_{m,k}$ are obtained from Cauchy products 
\begin{equation}\label{eq:qh07}
p_{m,k}=\sum_{j=0}^k\tau_jP_{m,k-j},\quad q_{m,k}=\sum_{j=0}^k\tau_jQ_{m,k-j}.
\end{equation}
The coefficients  $\tau_{2j}$ are given in \eqref{eq:asy03},  and $\tau_j=0$ for odd index $j$. The two infinite series in \eqref{eq:qh06} converge for $\vert s\vert <1$.

We see different contributions for the asymptotic expansion of $F_q(\eta,\beta)$ that follow from the expansion in \eqref{eq:qh06} and we write
\begin{equation}\label{eq:qh08}
F_q(\eta,\beta)=\Gamma\left(m-\tfrac12\right)  \left(\frac{2}{\beta}\right)^{m-\frac12}\left(F_q^{(P)}(\eta,\beta)+F_q^{(Q)}(\eta,\beta)\right)+F_q^{(R)}(\eta,\beta)+F_q^{(S)}(\eta,\beta),
\end{equation}
where $F_q^{(S)}(\eta,\beta)$ will be given in the following lemma.
\begin{lemma}\label{lem:05}
For fixed $\beta$ and $q$, $q =m-\frac32$, $m=2,3,4,\ldots$ we have for $\eta\to+\infty$ the asymptotic expansions
\begin{equation}\label{eq:qh09}
\begin{array}{r@{\,}c@{\,}l}
F_q^{(P)}(\eta,\beta)&\sim& \dsp{A_m\left(-(\gamma+\ln\eta)p_{m,0}+\sum_{k=1}^\infty (-1)^{k}p_{m,k}(k-1)!\,\,\eta^{-k}\right),}\\[8pt]
F_q^{(Q)}(\eta,\beta)&\sim& \dsp{A_m q_{m,0},}
\end{array}
\end{equation}
where $\gamma$ is Euler's constant. The finite series in \eqref{eq:qh06} gives a relation in terms of the standard Fermi-Dirac integral:
\begin{equation}\label{eq:qh10}
\begin{array}{r@{\,}c@{\,}l}
F_q^{(R)}(\eta,\beta)&=&\dsp{\Gamma\left(m-\tfrac12\right) \left(\frac{2}{\beta}\right)^{m-\frac12}\sum_{k=1}^m \frac{R_{m,k} }{\Gamma(k)}F_{k-1}(\eta)}\\[8pt]
&=&\dsp{ \sum_{j=0}^{m-1} (-1)^j\frac{\left(-\frac12\right)_j}{j!}\left(\frac{2}{\beta}\right)^{j-\frac12}F_{m-j-1}(\eta)}.
\end{array}
\end{equation}
In addition, we have the convergent  expansion for all $\eta>0$
\begin{equation}\label{eq:qh11}
F_q^{(S)}(\eta,\beta)=\tfrac12\sqrt{\pi}\,\left(\frac{2}{\beta}\right)^{m-\frac12}(-1)^m
\sum_{n=1}^\infty (-1)^{n}e^{-n\eta-2n/\beta} U\left(\tfrac32,m+1,2n/\beta\right).
\end{equation}

\end{lemma}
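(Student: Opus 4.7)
My plan is to start from the contour representation \eqref{eq:asy04} with $q=m-\tfrac32$, substitute the Kummer decomposition \eqref{eq:qh01}--\eqref{eq:qh06}, and treat the resulting four pieces of the integrand separately: the $\ln s$-part (yielding $F_q^{(P)}$), the analytic part (yielding $F_q^{(Q)}$), the finite Laurent polynomial part (yielding $F_q^{(R)}$), and the tail of residues at $s=-1,-2,\ldots$ (yielding $F_q^{(S)}$) that remains because the two series in \eqref{eq:qh06} converge only for $|s|<1$.

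For $F_q^{(R)}$ the argument is direct: the finite Laurent polynomial $\sum_{k=1}^m R_{m,k}s^{-k-1}$ combined with $\pi s/\sin(\pi s)$ is a sum of standard Fermi-Dirac kernels, so \eqref{eq:prop16} identifies each term as $F_{k-1}(\eta)/\Gamma(k)$ and yields the first displayed form of \eqref{eq:qh10}. Substituting the explicit $R_{m,k}$ from \eqref{eq:qh05}, reindexing with $j=m-k$, and invoking the identity $\Gamma(\tfrac32)/[\Gamma(\tfrac32-j)\,j!]=(-1)^j(-\tfrac12)_j/j!$ gives the second form.

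For $F_q^{(P)}$ and $F_q^{(Q)}$ I would deform the Bromwich contour to a Hankel loop $\calL$ tight around the negative real axis with radius strictly less than $1$, where the series representations are valid. The standard loop identity $\frac{1}{2\pi i}\int_\calL e^{\eta s}s^{-\alpha}\,ds=\eta^{\alpha-1}/\Gamma(\alpha)$ applied termwise at $\alpha=1-k$ vanishes for $k\geq 1$ and equals $1$ for $k=0$, collapsing the $Q$-series to $F_q^{(Q)}\sim A_m q_{m,0}$. Differentiating the same identity in $\alpha$ gives the log-integral
\begin{equation*}
\frac{1}{2\pi i}\int_\calL e^{\eta s}s^{-\alpha}\ln s\,ds=\frac{\eta^{\alpha-1}}{\Gamma(\alpha)}\bigl(\psi(\alpha)-\ln\eta\bigr),
\end{equation*}
which at $\alpha=1-k$ evaluates to $-(\gamma+\ln\eta)$ for $k=0$ and, using the limiting value $\psi(\alpha)/\Gamma(\alpha)\to(-1)^{n+1}n!$ at the pole $\alpha=-n$, to $(-1)^k(k-1)!\,\eta^{-k}$ for $k\geq 1$; summing against $A_m p_{m,k}$ produces the first line of \eqref{eq:qh09}.

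Finally, $F_q^{(S)}$ captures the difference between the Hankel computation and the true Bromwich integral. I would close the contour in the left half-plane (around the branch cut of $U_q$ along $s<0$) and collect the residues of $\pi/\sin(\pi s)$ at $s=-n$ (with $\mathrm{Res}_{s=-n}[\pi/\sin(\pi s)]=(-1)^n$), giving a sum involving the branch-averaged value of $U(m-\tfrac12,m+1,-2n/\beta)$. The main obstacle is then showing that this averaged value equals $\tfrac12\sqrt\pi(-1)^m e^{-z}U(\tfrac32,m+1,z)/\Gamma(m-\tfrac12)$. To see this, one notes by a direct ODE check that $\phi(z)=e^z U(a,b,-z)$ satisfies Kummer's equation with parameters $(b-a,b)$, so
\begin{equation*}
e^z U(a,b,-z)=C_1 M(b-a,b,z)+C_2 U(b-a,b,z).
\end{equation*}
Matching asymptotics at $z\to\infty$ and $z\to 0$ pins down $C_1=e^{\mp i\pi a}\Gamma(b-a)/\Gamma(b)$ (branch-dependent) and $C_2=(-1)^m\Gamma(b-a)/\Gamma(a)$. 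For $a=m-\tfrac12$ half-integer the average over the two branches kills $C_1$ since $\cos(\pi(m-\tfrac12))=0$, leaving only $C_2=(-1)^m\Gamma(\tfrac32)/\Gamma(m-\tfrac12)$, which multiplied by $\Gamma(m-\tfrac12)$ gives exactly the prefactor in \eqref{eq:qh11}. Convergence of the series is immediate from $U(\tfrac32,m+1,2n/\beta)=O(n^{-3/2})$ as $n\to\infty$.
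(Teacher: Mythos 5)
Your proposal is correct and follows the same architecture as the paper's proof: Watson's lemma for loop integrals for the $P$- and $Q$-parts, identification of the Laurent part with standard Fermi--Dirac integrals via \eqref{eq:prop16} for \eqref{eq:qh10}, and the Dingle--Garoni residue mechanism for the exponentially small part. The differences are ones of detail, and they mostly work in your favour. For the $P$-series with $k\ge1$ the paper splits the loop along the negative axis with phases $\pm\pi$ and cancels the $\ln|s|$ contributions, while you differentiate the reciprocal-gamma loop identity in $\alpha$ and use $\psi(\alpha)/\Gamma(\alpha)\to(-1)^{n+1}n!$ at $\alpha=-n$; both give $(-1)^k(k-1)!\,\eta^{-k}$. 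The genuinely valuable addition is your treatment of \eqref{eq:qh11}: the paper's proof disposes of it in one sentence (``by the method described for \eqref{eq:asy10}''), leaving implicit the nontrivial step of converting the branch-averaged value of $U\left(m-\tfrac12,m+1,-2n/\beta\right)$ on the cut into $e^{-2n/\beta}U\left(\tfrac32,m+1,2n/\beta\right)$. Your connection-formula argument for this step checks out: $e^zU(a,b,-z)$ does solve Kummer's equation with parameters $(b-a,b)$; matching at $z\to\infty$ gives $C_1^{\pm}=e^{\mp i\pi a}\Gamma(b-a)/\Gamma(b)$, whose branch average $\cos(\pi a)\Gamma(b-a)/\Gamma(b)$ vanishes precisely because $a=m-\tfrac12$ is a half-integer; matching the $z^{1-b}$ singularity at $z\to0$ (using that $e^{\pm i\pi(1-b)}=(-1)^m$ is branch-independent for integer $b=m+1$) gives $C_2=(-1)^m\Gamma\left(\tfrac32\right)/\Gamma\left(m-\tfrac12\right)$, and multiplication by $\Gamma\left(m-\tfrac12\right)\left(2/\beta\right)^{m-\frac12}$ and by the residue $(-1)^n$ of $\pi/\sin(\pi s)$ reproduces exactly the prefactor $\tfrac12\sqrt{\pi}\,(-1)^m$ in \eqref{eq:qh11}.

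One small bookkeeping caveat, which the paper shares: you collect the branch-averaged residues of the \emph{full} integrand, but the residues of the Laurent part $\frac{\pi s}{\sin(\pi s)}\sum_{k}R_{m,k}s^{-k-1}$ at $s=-n$ are already contained in $F_q^{(R)}$, through the exponentially small parts of the integer-order functions $F_{k-1}(\eta)$ appearing in \eqref{eq:qh10}. Strictly, $F_q^{(S)}$ should be the residue sum for the integrand with the Laurent part removed; the discrepancy is a sum of terms proportional to $F_{k-1}(-\eta)$, hence exponentially small, and is absorbed into the Poincar\'e relations ``$\sim$'' for $F_q^{(P)}$ and $F_q^{(Q)}$ in \eqref{eq:qh09} --- so the lemma as stated is unaffected, but the remark matters if one wants the four pieces of \eqref{eq:qh08} to be an exact identity at the exponentially small level. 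The same proviso applies to your phrase ``Hankel loop \ldots where the series representations are valid'': the loop extends to $-\infty$, so termwise integration is justified only through the standard Watson-type argument that a neighbourhood of the origin suffices, the rest contributing beyond all orders.
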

\begin{proof}
As in Lemma~\ref{lem:04} we use a modification of Watson's lemma for loop integrals.  We substitute the two infinite series expansions given in \eqref{eq:qh06} and observe that for $k\ge1$ all integrals corresponding to the second series vanish, and for $k=0$ we have
\begin{equation}\label{eq:qh12}
\frac{1}{2\pi i} \int_{-\infty}^{(0+)} e^{\eta s} s^{-1}\,ds=1. 
\end{equation}
Although all higher terms  with coefficients $q_{m,k}$ vanish, we cannot write $F_q^{(Q)}(\eta,\beta)= A_m q_{m,0}$, because the expansions in \eqref{eq:qh06} converge only for $s$ inside the unit circle. For the first series we have
\begin{equation}\label{eq:qh13}
\frac{1}{2\pi i} \int_{-\infty}^{(0+)} \ln s \, e^{\eta s} s^{-1}\,ds=-\gamma-\ln\eta. 
\end{equation}
This follows by using the relation
\begin{equation}\label{eq:qh14}
\frac{\eta^{z-1}}{\Gamma(z)}=
\frac{1}{2\pi i} \int_{-\infty}^{(0+)} e^{\eta s} s^{-z}\,ds,
\end{equation}
and the reciprocal gamma function in \eqref{eq:qh14}. We differentiate with respect to $z$ and take $z=1$ afterwards.  This gives \eqref{eq:qh13}.  For $k\ge1$ we evaluate the integrals related to the first series by integrating along the negative axis, with $\phase\,s=-\pi$ below the axis and $\phase\,s=\pi$ above the axis. The singularity at the origin is integrable. This gives
\begin{equation}\label{eq:qh15}
\begin{array}{r@{\,}c@{\,}l}
&&\dsp{\frac{1}{2\pi i} \int_{-\infty}^{(0+)} \ln s \, e^{\eta s} s^{k-1}\,ds=}\\[8pt]
&&
\quad\quad
\dsp{\frac{1}{2\pi i} \int_{-\infty}^0\left(\ln \vert s\vert -\pi i\right) \, e^{\eta s} s^{k-1}\,ds+
\frac{1}{2\pi i} \int_0^{-\infty}\left(\ln \vert s\vert +\pi i\right) \, e^{\eta s} s^{k-1}\,ds.}
\end{array}
\end{equation}
Separating the terms, we see that the integrals containing $\ln\vert s\vert$ cancel each other and the remaining parts give  $(-1)^k\Gamma(k)\eta^{-k}$. This gives the terms of the infinite series in \eqref{eq:qh09}. The finite expansion for $F_q^{(R)}(\eta,\beta)$ in \eqref{eq:qh10} follows from the finite series in \eqref{eq:qh06} and the contour integral of the standard Fermi-Dirac integral in \eqref{eq:prop16}. The expansion in \eqref{eq:qh11} can be obtained by the method as described for the convergent series in \eqref{eq:asy10} of Lemma~\ref{lem:04}. To evaluate the Kummer $U$-functions the relation in \eqref{eq:qh01} may be used.
\end{proof}

\section{Expansions with respect to \protectbold{\beta}}\label{sec:beta}
We start with  the following result  for $\beta\to0$:
\begin{lemma}\label{lem:06}
For fixed $\eta$ and $q$, 
\begin{equation}\label{eq:beta01}
F_q(\eta,\beta)\sim\sum_{n=0}^\infty (-1)^n\frac{\left(-\frac12\right)_n}{n!} \left(\tfrac12\beta \right)^nF_{q+n}(\eta),\quad \beta\to 0,
\end{equation}
where $F_{q}(\eta)$ is the standard Fermi-Dirac integral defined in \eqref{eq:int01}.
\end{lemma}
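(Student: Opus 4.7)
The plan is to expand the square-root factor $\sqrt{1+\beta x/2}$ in its binomial series at $\beta=0$, interchange summation and integration in the defining integral \eqref{eq:int02}, and then verify that the result is asymptotic by bounding the truncation error uniformly in $x\ge 0$.

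The formal computation is immediate: from the binomial series $\sqrt{1+y}=\sum_{n=0}^\infty\binom{1/2}{n}y^n$ together with the standard identity $\binom{1/2}{n}=(-1)^n(-1/2)_n/n!$, setting $y=\beta x/2$ and interchanging sum and integral produces the claimed expansion upon noting that $\int_0^\infty x^{q+n}/(e^{x-\eta}+1)\,dx=F_{q+n}(\eta)$.

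The subtlety, which is the only real obstacle, is that this binomial series converges only on the sub-interval $0\le x<2/\beta$, so the interchange cannot yield a convergent identity and \eqref{eq:beta01} must be read in the Poincar\'e asymptotic sense. To justify it, I would use Taylor's theorem with Lagrange remainder for $f(y)=\sqrt{1+y}$, which gives
\[
\sqrt{1+y}-\sum_{n=0}^{N-1}\binom{1/2}{n}y^n=\binom{1/2}{N}(1+\xi)^{\frac12-N}y^N
\]
for some $\xi\in(0,y)$. The key point is that for $N\ge 1$ and $\xi\ge 0$ one has $(1+\xi)^{\frac12-N}\le 1$, so the remainder satisfies $|R_N(y)|\le|\binom{1/2}{N}|\,y^N$ \emph{uniformly} on $[0,\infty)$. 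Substituting $y=\beta x/2$ and integrating against $x^q/(e^{x-\eta}+1)$ then bounds the truncation error by $|\binom{1/2}{N}|(\beta/2)^N F_{q+N}(\eta)=O(\beta^N)$ as $\beta\to 0$, exactly as an asymptotic expansion requires. The large-$x$ tail presents no difficulty because the Taylor derivatives $(1+y)^{\frac12-N}$ of order $N\ge 1$ are monotone decreasing in $y\ge 0$, and the Fermi--Dirac kernel supplies the decay needed to keep $F_{q+N}(\eta)$ finite for fixed $\eta$.
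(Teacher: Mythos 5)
Your proof is correct, and it reaches rigor by a genuinely different route than the paper. The formal step---expanding $\sqrt{1+\beta x/2}$ by the binomial series \eqref{eq:beta02} and integrating term by term---is exactly what the paper calls a ``straightforward verification''; but for the rigorous statement the paper abandons that route and instead appeals to the contour-integral representation of Lemma~\ref{lem:02} together with the large-$z$ expansion \eqref{eq:prop05} of the Kummer function and its error bounds from \cite[\S13.7(ii)]{Olde:2010:CHF}. You stay with the real integral \eqref{eq:int02} and make the direct route rigorous: Taylor's theorem with Lagrange remainder, plus the observation that $(1+\xi)^{\frac12-N}\le 1$ for $\xi\ge0$ and $N\ge1$, gives the uniform bound
\begin{equation*}
\left|F_q(\eta,\beta)-\sum_{n=0}^{N-1}(-1)^n\frac{\left(-\frac12\right)_n}{n!}\left(\tfrac12\beta\right)^n F_{q+n}(\eta)\right|
\le \frac{\left|\left(-\frac12\right)_N\right|}{N!}\left(\tfrac12\beta\right)^N F_{q+N}(\eta)=\bigO\left(\beta^N\right),\quad \beta\to0,
\end{equation*}
which is precisely the Poincar\'e property, with the bonus that the error is bounded by the magnitude of the first omitted term. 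Your argument is more elementary and self-contained; it is essentially the classical Cox--Giuli argument with error bound alluded to in Remark~\ref{rem:rembeta}. What the paper's route buys is uniformity of method---the same contour-integral framework produces all of the paper's expansions (large $\eta$, large $\beta$, and this one)---and it delegates the error analysis to well-tabulated Kummer-function bounds, which also apply for complex $\eta$ in the strip $\vert \Im \eta\vert <\pi$ where the representation \eqref{eq:prop08} holds.
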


A straightforward verification is based on using the expansion
\begin{equation}\label{eq:beta02}
\sqrt{1+\tfrac12\beta x}=\sum_{n=0}^\infty (-1)^n\frac{\left(-\frac12\right)_n}{n!} \left(\tfrac12\beta x\right)^n,\quad \vert \beta x\vert<2.
\end{equation}
A rigorous proof with an error bound follows from the representation in Lemma~\ref{lem:02} by using the expansion in \eqref{eq:prop05} with its error bound. For this we refer to \cite[\S13.7(ii)]{Olde:2010:CHF}; see also Remark~\ref{rem:rembeta}.

For large values of $\beta$ we write $F_q(\eta,\beta)$ as in \eqref{eq:asy08}:
\begin{equation}\label{eq:beta03}
\begin{array}{r@{\,}c@{\,}l}
F_q(\eta,\beta)&=&\dsp{\left(\frac{2}{\beta}\right)^{q+1}\frac{\Gamma\left(-q-\frac32\right)\Gamma(q+1)}{\Gamma\left(-\frac12\right)} F_q^{(1)}(\eta,\beta) \ +}\\[8pt]
&&\quad\quad \dsp{\left(\frac{2}{\beta}\right)^{-\frac12}\Gamma\left(q+\tfrac32\right)F_q^{(2)}(\eta,\beta),}\end{array}
\end{equation}
and we have the following result.

\begin{lemma}\label{lem:07}
For fixed $\eta$ and $q$, with $q\ne \frac12,\frac32,\frac52,\ldots$, and $0<c<1$
we have 
\begin{equation}\label{eq:beta04}
\begin{array}{r@{\,}c@{\,}l}
F_q^{(1)}(\eta,\beta) &\sim& \dsp{\sum_{k=0}^\infty \frac{c_k}{\beta^k}\,\Phi^{(1)}_k(\eta)},\quad
F_q^{(2)}(\eta,\beta) \sim \dsp{\sum_{k=0}^\infty \frac{d_k}{\beta^k}\,\Phi^{(2)}_k(\eta,q),}\\[8pt]
c_k&=& \dsp{\frac{2^k\left(q+1\right)_k}{k!\,\left(q+\frac52\right)_k},\quad d_k=\frac{2^k\left(-\frac12\right)_k}{k!\,\left(-q-\frac12\right)_k},}\\[8pt]
\Phi^{(1)}_k(\eta)&=&\dsp{\frac{1}{2 i}\int_{c-i\infty}^{c+i\infty}e^{\eta s} s^k\frac{ds}{\sin{(\pi s)}},}\\[8pt]\Phi^{(2)}_k(\eta,q)&=&\dsp{\frac{1}{2 i}\int_{c-i\infty}^{c+i\infty}e^{\eta s} s^{k-q-\frac32}\frac{ds}{\sin{(\pi s)}},}\\[8pt]
\end{array}
\end{equation}
as $\beta\to\infty$.
\end{lemma}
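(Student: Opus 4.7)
The plan is to expand the Kummer factors appearing in $f_1(s)$ and $f_2(s)$ (see \eqref{eq:asy07}) as Taylor series in the small parameter $2s/\beta$, substitute into the contour integrals of \eqref{eq:asy08}, and interchange the sum with the integral. Since ${}_1F_1$ is an entire function of its argument, we have the absolutely convergent expansion
\begin{equation*}
\CHF{q+1}{q+\frac52}{\frac{2s}{\beta}}=\sum_{k=0}^\infty \frac{(q+1)_k}{(q+\frac52)_k\,k!}\left(\frac{2s}{\beta}\right)^k=\sum_{k=0}^\infty \frac{c_k\,s^k}{\beta^k},
\end{equation*}
which reproduces the formula for $c_k$ in \eqref{eq:beta04}, and an analogous expansion for the second Kummer function yields the $d_k$. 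Inserting these series into the definitions of $F_q^{(1)}(\eta,\beta)$ and $F_q^{(2)}(\eta,\beta)$ in \eqref{eq:asy08} and integrating term by term produces the series claimed in \eqref{eq:beta04}, since the $\pi$ and the $1/s$ (resp.\ $1/s^{q+5/2}$) combine with the prefactor $1/(2\pi i)$ to give precisely the $1/(2i)$ in the definition of $\Phi^{(j)}_k$.

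To upgrade this formal manipulation to a Poincar\'e asymptotic statement as $\beta\to\infty$, I would truncate each Kummer series after $N$ terms, write the remainder as $R_N(s,\beta)$, and bound the contribution of $R_N$ to the contour integral. On the vertical line $\Re s=c$ with $0<c<1$, the integrand carries the rapid decay $|1/\sin(\pi s)|=\bigO(e^{-\pi|\Im s|})$ as $|\Im s|\to\infty$. On the portion $|\Im s|\le M\beta$ (any fixed $M>0$) the elementary Taylor-remainder estimate gives $|R_N|\le C_N(|s|/\beta)^N$; on the complementary portion $|\Im s|>M\beta$, where $|2s/\beta|$ is no longer small, one appeals to the classical large-argument bound for ${}_1F_1(a,b,z)$, which in this regime yields only polynomial growth in $|z|$ because $\Re z=2c/\beta$ is small and positive, so no exponential blow-up arises. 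Combining both estimates with the exponential decay from $1/\sin(\pi s)$ renders the integral of $R_N$ absolutely convergent and of order $\beta^{-N}$, confirming the asymptotic character of the expansions in \eqref{eq:beta04}.

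The main obstacle is producing a single clean uniform bound on $R_N$ valid across the entire contour: the Taylor bound $(|s|/\beta)^N$ is sharp only where $|s|/\beta$ stays bounded, while for $|\Im s|$ of order $\beta$ or larger one has to switch to the large-argument asymptotics of ${}_1F_1$ itself. Splitting the contour at $|\Im s|=M\beta$ and patching these two regimes, using the $e^{-\pi|\Im s|}$ decay of $1/\sin(\pi s)$ to absorb any polynomial growth in $|s|$ that appears, is the technical heart of the argument. Once the uniform $\bigO(\beta^{-N})$ bound is in hand, the Poincar\'e asymptotic statement in \eqref{eq:beta04} follows mechanically, with the integrals $\Phi^{(1)}_k(\eta)$ and $\Phi^{(2)}_k(\eta,q)$ themselves converging absolutely because of the same exponential decay.
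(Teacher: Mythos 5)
Your proposal takes exactly the paper's route: the paper's proof of Lemma~\ref{lem:07} consists precisely of substituting the Maclaurin expansions of the two Kummer functions in \eqref{eq:asy07} into the contour integrals defining $F_q^{(1)}$ and $F_q^{(2)}$ in \eqref{eq:asy08} and integrating term by term, which is your formal computation (and your identification of $c_k$, $d_k$ and of the factor $\pi/(2\pi i)=1/(2i)$ is correct). Your additional truncation-and-remainder argument---splitting the contour at $\vert\Im s\vert=M\beta$, using the Taylor bound where $\vert 2s/\beta\vert$ is bounded and the algebraic large-$z$ behaviour of ${}_1F_1$ where it is not, all absorbed by the $e^{-\pi\vert\Im s\vert}$ decay of $1/\sin(\pi s)$---is sound and simply makes explicit the uniform $\beta^{-N}$ error estimate that the paper leaves implicit.
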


For $q = \frac12,\frac32,\frac52,\ldots$ we write $F_q(\eta,\beta)$ as in \eqref{eq:qh08}:
\begin{equation}\label{eq:beta05}
F_q(\eta,\beta)=\Gamma\left(m-\tfrac12\right)  \left(\frac{2}{\beta}\right)^{m-\frac12}\left(F_q^{(P)}(\eta,\beta)+F_q^{(Q)}(\eta,\beta)\right)+F_q^{(R)}(\eta,\beta),
\end{equation}
and we have the following result.
\begin{lemma}\label{lem:08}
For fixed $\eta$ and $q$, with $q = m-\frac32$, $m=2,3,4,\ldots$, and $0<c<1$,
we have as $\beta\to\infty$
\begin{equation}\label{eq:beta06}
F_q^{(P)}(\eta,\beta) \sim A_m\,\sum_{k=0}^\infty \frac{\wt{P}_{m,k}}{\beta^k}\Psi_k(\eta),\quad
F_q^{(Q)}(\eta,\beta) \sim A_m\,\sum_{k=0}^\infty \frac{\wt{Q}_{m,k}}{\beta^k}\Phi^{(1)}_k(\eta),
\end{equation}
and $F_q^{(R)}(\eta, \beta)$ is the same as in \eqref{eq:qh10}. The function  $\Phi^{(1)}_k(\eta)$ is defined in \eqref{eq:beta04} and
\begin{equation}\label{eq:beta07}
\begin{array}{r@{\,}c@{\,}l}
\Psi_k(\eta)&=&\dsp{\frac{1}{2 i}\int_{c-i\infty}^{c+i\infty}\ln s\,e^{\eta s} s^k\,\frac{ds}{\sin{(\pi s)}},}\\[8pt]
\wt{P}_{m,k}&=&\dsp{\frac{2^{k}\left(m-\frac12\right)_k}{k!\,(m+1)_k},
\quad A_{m}=\frac{(-1)^{m+1}}{m!\,\Gamma\left(-\frac12\right)},} \\[8pt]
\wt{Q}_{m,k}&=&\dsp{\wt{P}_{m,k}\left(\ln\left(\frac{2}{\beta}\right)+\psi\left(m-\tfrac12+k\right)-\psi\left(1+k\right)-\psi\left(m+k+1\right)\right).}
\end{array}
\end{equation}
\end{lemma}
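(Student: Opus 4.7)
The plan is to start from the contour-integral decomposition \eqref{eq:qh03}--\eqref{eq:qh06} and reshuffle the $\beta$-dependence in the coefficients $p_{m,k}$, $q_{m,k}$ so that it factors out in clean powers of $1/\beta$. From \eqref{eq:qh05} one reads off directly that $P_{m,k}=\widetilde{P}_{m,k}/\beta^{k}$ with $\widetilde{P}_{m,k}$ exactly as stated in \eqref{eq:beta07}, and $Q_{m,k}=\widetilde{Q}_{m,k}/\beta^{k}$ with the $\ln(2/\beta)$ piece absorbed into $\widetilde{Q}_{m,k}$ as a logarithmically $\beta$-dependent coefficient which, for the purposes of a Poincar\'e-type expansion in inverse powers of $\beta$, is treated as a constant at each order.

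The second step is to rearrange the Cauchy products $p_{m,k}=\sum_{j=0}^{k}\tau_{j}P_{m,k-j}$. Interchanging the two summations in $\sum_{k\ge 0}p_{m,k}\,s^{k-1}$ and reindexing with $\ell=k-j$ gives
\begin{equation*}
\sum_{k=0}^{\infty}p_{m,k}\,s^{k-1}=\Bigl(\sum_{j=0}^{\infty}\tau_{j}s^{j}\Bigr)\sum_{\ell=0}^{\infty}\frac{\widetilde{P}_{m,\ell}}{\beta^{\ell}}\,s^{\ell-1}=\frac{\pi s}{\sin(\pi s)}\sum_{\ell=0}^{\infty}\frac{\widetilde{P}_{m,\ell}}{\beta^{\ell}}\,s^{\ell-1},
\end{equation*}
where the generating function of the $\tau_{j}$ is \eqref{eq:asy02}. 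The identical manipulation applied to $q_{m,k}$ produces the analogous identity with $\widetilde{Q}_{m,\ell}$ replacing $\widetilde{P}_{m,\ell}$. Inserting these into the definitions of $F_{q}^{(P)}$ and $F_{q}^{(Q)}$ read off from \eqref{eq:qh06} and exchanging sum and integral yields, formally,
\begin{equation*}
F_{q}^{(P)}(\eta,\beta)\sim A_{m}\sum_{\ell=0}^{\infty}\frac{\widetilde{P}_{m,\ell}}{\beta^{\ell}}\,\Psi_{\ell}(\eta),\qquad F_{q}^{(Q)}(\eta,\beta)\sim A_{m}\sum_{\ell=0}^{\infty}\frac{\widetilde{Q}_{m,\ell}}{\beta^{\ell}}\,\Phi^{(1)}_{\ell}(\eta),
\end{equation*}
with the integrals $\Psi_{\ell}$ and $\Phi^{(1)}_{\ell}$ precisely those defined in \eqref{eq:beta04} and \eqref{eq:beta07}; the factor $\pi$ produced by $\pi s/\sin(\pi s)$ combines with the prefactor $1/(2\pi i)$ of the contour integral to give the $1/(2i)$ of \eqref{eq:beta04}. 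The piece $F_{q}^{(R)}(\eta,\beta)$ is already in closed form and is carried over unchanged from \eqref{eq:qh10}.

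The main obstacle is to justify the formal interchange of summation and integration as a genuine asymptotic expansion in $\beta$. The power series in $s$ converges for $|s|<1$ but not uniformly on the infinite contour $\Re s=c$, so one must truncate at order $N$ and control the remainder. Using the standard integral remainder for the ${}_{1}F_{1}$ series (equivalently, for the $U$-function representation \eqref{eq:qh01}), the truncated expression for $U\bigl(m-\tfrac12,m+1,2s/\beta\bigr)$ differs from the full function by an amount bounded pointwise on $\Re s=c$ by $C|s|^{N+1}\beta^{-N-1}$ times a slowly varying factor in $s$. Combined with the exponential decay of $1/\sin(\pi s)$ as $|\Im s|\to\infty$ and the fact that $0<c<1$ keeps the contour bounded away from the real-axis poles, the associated remainder integral is absolutely convergent and of order $\beta^{-N-1}$ as $\beta\to\infty$. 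The same exponential-decay argument shows that each $\Psi_{\ell}(\eta)$ and $\Phi^{(1)}_{\ell}(\eta)$ is a well-defined $\beta$-independent function of $\eta$, placing the two expansions on a firm Poincar\'e-asymptotic footing.
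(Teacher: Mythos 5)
Your proposal is correct and follows essentially the same route the paper intends when it says ``the approach of \S\ref{sec:qhalf} can be used'': you factor the powers of $2/\beta$ out of $P_{m,k}$, $Q_{m,k}$, keep $\pi s/\sin(\pi s)$ unexpanded while the Kummer-type series is expanded, and integrate term by term along $\Re s=c$, identifying each term with $\Psi_k$ or $\Phi_k^{(1)}$, plus you supply the truncation-and-remainder justification the paper omits entirely. One immaterial slip: after your rearrangement the series multiplying $\pi s/\sin(\pi s)$ is a ${}_1F_1$-type series convergent for \emph{all} $s$ (it is the Cauchy-product series $\sum_k p_{m,k}s^{k-1}$ that converges only for $|s|<1$), but your remainder bound is the right justification in either reading.
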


\begin{proof}
The proof of Lemma~\ref{lem:07} follows from using the expansions of the  Kummer functions in \eqref{eq:asy07} that occur in the functions $f_1(s)$ and $f_2(s)$ used in \eqref{eq:asy08}. For the proof of Lemma~\ref{lem:08} the approach of \S\ref{sec:qhalf} can be used.
\end{proof}

The  auxiliary functions defined in \eqref{eq:beta04} and \eqref{eq:beta07} can be written as 
\begin{equation}\label{eq:beta08}
\begin{array}{r@{\,}c@{\,}l}
\Phi^{(1)}_k(\eta)&=&\widehat{F}_{-k-1}(\eta),\quad \Phi^{(2)}_k(\eta,q)=\widehat{F}_{q+\frac12-k}(\eta), \\[8pt]
\Psi_k(\eta)&=&-\left. \frac{\partial}{\partial q}\widehat{F}_q(\eta)\right\vert_{q=-k-1},\quad  k=0,1,2,3,\ldots\,,
\end{array}
\end{equation}
where (see also \eqref{eq:prop16})
\begin{equation}\label{eq:beta09}
\begin{array}{r@{\,}c@{\,}l}
\widehat{F}_{q}(\eta)&=&\dsp{\frac{F_q(\eta)}{\Gamma(q+1)}=\frac{1}{\Gamma(q+1)}\int_0^\infty \frac{x^q}{e^{x-\eta}+1}\,dx}\\[8pt]
&=&\dsp{\frac{1}{2i}\int_{c-i\infty}^{c+i\infty}\frac{e^{\eta s}}{s^{q+1}\sin(\pi s)}\,ds,}\quad 0<c<1.
\end{array}
\end{equation}

Observe that  in \eqref{eq:beta08} functions $\widehat{F}_{q}(\eta)$ are used with  $q\le -1$.  In the Appendix we give details of the  interpretation of $\widehat{F}_{q}(\eta)$ for such cases.

\begin{remark}\label{rem:rembeta}
The expansion given in Lemma~\ref{lem:06} for small values of $\beta$  is also given in \cite[\S24.7c]{Cox:1968:PSS}, complete with error bound of the remainder in the finite expansion. For large values of  $\beta$ this reference derives the expansion by using a similar procedure, which means that $\sqrt{1+(\beta x)/2}$ is written as $\sqrt{\beta x/2}\,\sqrt{1+2/(\beta x)}$ and the second square root is expanded in powers of $2/(\beta x)$. By using this expansion in \eqref{eq:int02} (which is a dubious way to proceed, because $\beta x$ is not large for all $x$ in the interval of integration) an expansion is obtained which is related with our expansion of $F_q^{(2)}(\eta,\beta)$ in \eqref{eq:beta04}. That is,  the following expansion of $F_q(\eta,\beta)$:
\begin{equation}\label{eq:beta21}
\left(\frac{\beta}{2}\right)^{\frac12}\sum_{k=0}^{K_q} (-1)^k\frac{\left(-\frac12\right)_k}{k!}\left(\frac{2}{\beta}\right)^k F_{q+\frac12-k}(\eta),\quad \beta\to\infty,
\end{equation}
follows from \cite[\S24.7c]{Cox:1968:PSS}, where it is given as an infinite expansion and where in our notation  $K_q$ is the largest positive integers for which $q+\frac12-K_q> -1$. The authors give a warning that $F_{q}(\eta)$ is not defined for $q\le-1$, and that in their infinite expansion  only  the terms with $q+\frac12-k> -1$  can be retained.  We observe that the finite sum in \eqref{eq:beta21} equals the result for  $F_q^{(R)}(\eta,\beta)$ in Lemma~\ref{lem:05}, equation \eqref{eq:qh10} and it shows up in  Lemma~\ref{lem:08} as well. Compared with our Lemma~\ref{lem:08}, we conclude that the large-$\beta$  expansion given in \cite[\S24.7c]{Cox:1968:PSS} is incomplete.
\end{remark}

\section{Numerical testing}

In this section we demonstrate the performance of the expansions given in the previous sections, without proposing an algorithm that can handle all cases of the parameters. In the literature many details can be found on numerical evaluations of the Fermi-Dirac integrals. Sometimes these algorithms use analytical expansions, for example  asymptotic approximations with a limited number of terms, but many papers concentrate on numerical quadrature. 

The most extensive numerical methods for the evaluation can be found in \cite{Mohankumar:2016:OTV}, which paper shows tables with $10^{-20}$ relative precision with values of $\eta$ up to $5\times10^4$. The main method is based on the trapezoidal rule for the standard integral after several transformations are used and the influence of the poles is taken into account. Gauss quadrature has also been used in several papers, and for an overview we refer to \cite{Mohankumar:2016:OTV}.

We first consider a numerical test of the expansion \eqref{eq:prop07} ($\eta<0$) implemented in Matlab. 
We sum terms in the expansion up to a precision of $10^{-14}$.  
In Figure~\ref{fig:fig02} we show the results obtained for $\beta = 0$ (the standard Fermi-Dirac integral) and $q=3/4$;
for comparison, we use  \eqref{eq:int01} calculated with the Matlab adaptive numerical
integration function. We see  (Figure \ref{fig:fig02}, left)
that the relative accuracy obtained with the expansion
was, as expected, $\sim 10^{-14}$ for moderate/large values of $\eta$.
 We also show (Figure \ref{fig:fig02}, right) the number of terms needed to obtain this accuracy. As can be seen, just four terms
are needed when $\eta \le -15$ and less than 10 terms for $-15 \le \eta \le -5$.
Figure~\ref{fig:fig03} shows similar results for two other values of $\beta$ (the relativistic Fermi-Dirac integral); for comparison, we use  \eqref{eq:int02}
evaluated with the Matlab numerical
integration function.

\begin{figure}
\begin{center}
\includegraphics[width=14.5cm]{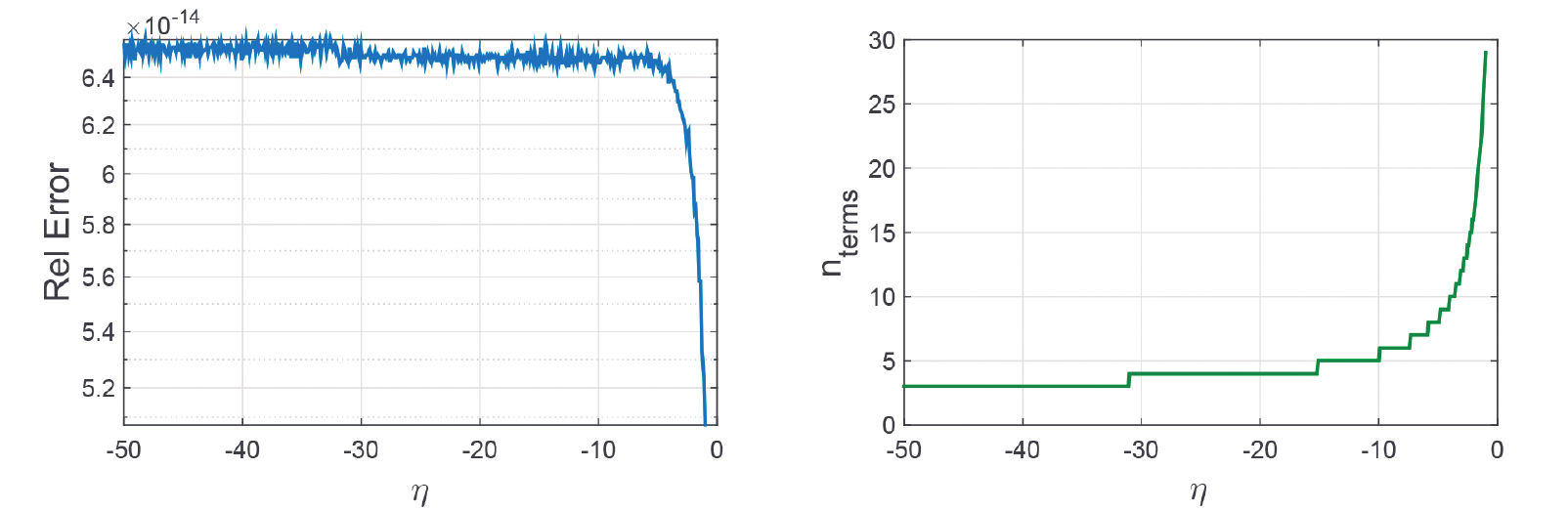}
\caption{
\label{fig:fig02} Test of the expansion \eqref{eq:prop17} for $\beta=0$, $q=3/4$ and different values of $\eta$. Left: Relative accuracy obtained with the expansion. 
Right: Number of terms needed in the expansion to obtain the accuracy shown in the left figure.}
\end{center}
\end{figure}

\begin{figure}
\begin{center}
\includegraphics[width=14.5cm]{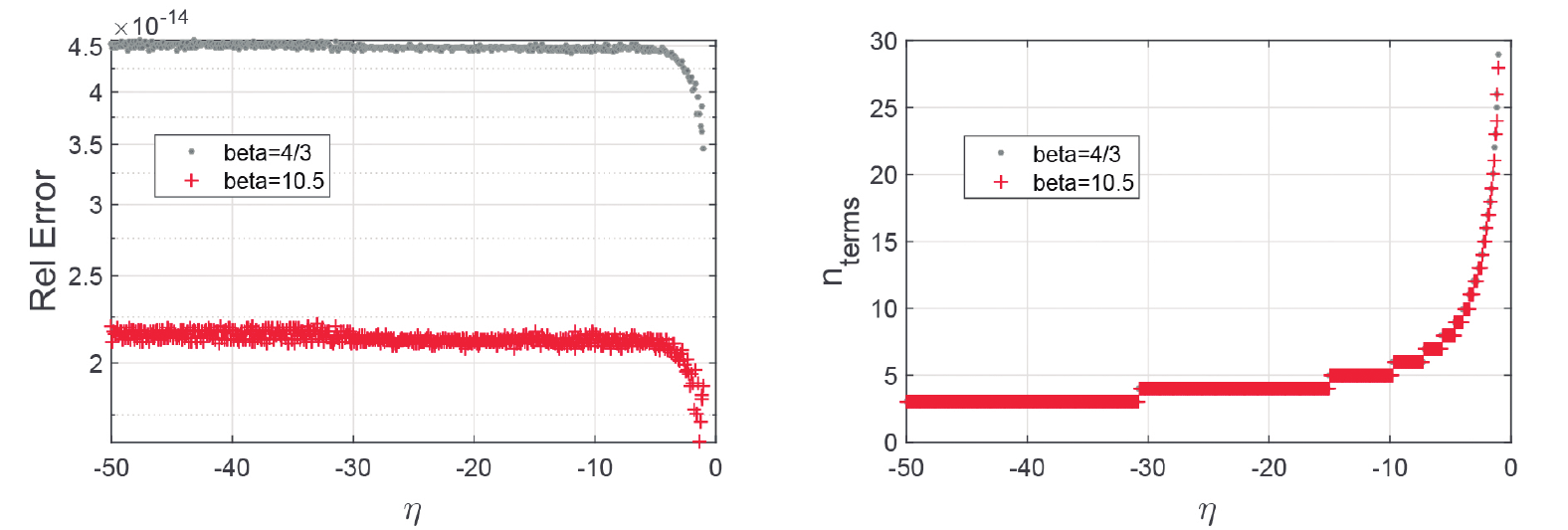}
\caption{
\label{fig:fig03} Test of the expansion \eqref{eq:prop07} for $\beta=4/3,\,10.5$, $q=3/4$ and different values of $\eta$. Left: Relative accuracy obtained with the expansion. 
Right: Number of terms needed in the expansion to obtain the accuracy shown in the left figure.}
\end{center}
\end{figure}

For  large positive $\eta$ and  $q\ne \frac12,\frac32,\frac52,\ldots$, a test for \eqref{eq:asy08}, by using the expansions  \eqref{eq:asy09}
and  \eqref{eq:asy10}, is shown in Figure~\ref{fig:fig:fig04}.
 We take $n=10$ in the expansions.
In the figure we also show the results
obtained without considering all the exponentially small terms in \eqref{eq:asy10}. It is interesting to note that
these terms have a very minor effect on the accuracy of the expansion for large values of $\eta$. However, as we mentioned
in Section~\ref{sec:eta}, the effect can be 
appreciated for smaller values of $\eta$ (see Figure~\ref{fig:fig01}).
The figure shows that an accuracy better than single precision $\sim 10^{-8}$ can be obtained when $\eta \ge 15$ for the two
values of $\beta$ considered in test.

\begin{figure}
\begin{center}
\includegraphics[width=14.5cm]{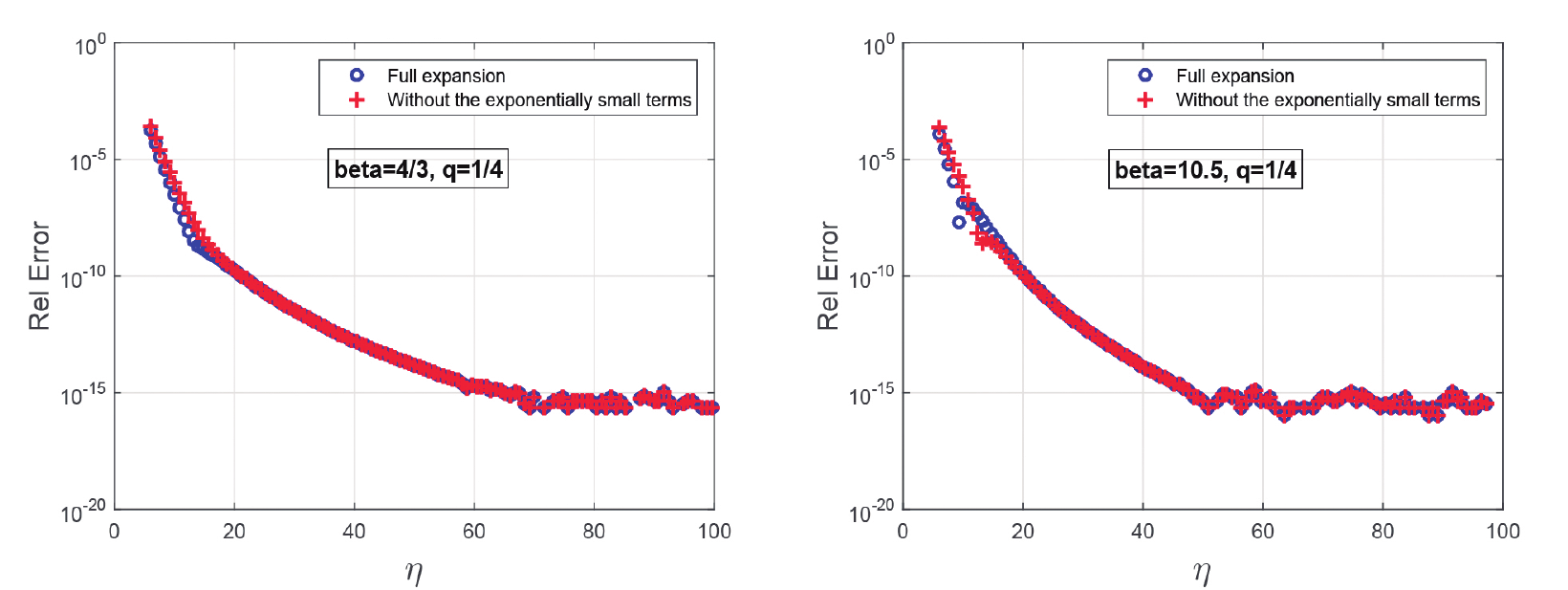}
\caption{
\label{fig:fig:fig04} Test of  \eqref{eq:asy08} by using the expansions  \eqref{eq:asy09}
and  \eqref{eq:asy10} for $\beta=4/3,\,10.5$ and $q=1/4$. The results
obtained without considering all the exponentially small terms in \eqref{eq:asy10} are also shown.}
\end{center}
\end{figure}

Figure~\ref{fig:fig:fig05} shows examples of the accuracy of the expansion \eqref{eq:qh08} for large positive $\eta$ and 
$q= \frac12,\frac32,\frac52,\ldots\,$. The expansions in  \eqref{eq:qh09}, the series  \eqref{eq:qh10} 
and the convergent expansion \eqref{eq:qh11}  have been used in the calculations. For the evaluation of
the standard Fermi-Dirac functions appearing  in \eqref{eq:qh10} we consider the expansion  \eqref{eq:asy01}
with $n=8$ terms. Note that, in this case, there is no contribution from the $\cos(\pi q)$ term in  \eqref{eq:asy01}.

\begin{figure}
\begin{center}
\includegraphics[width=14.5cm]{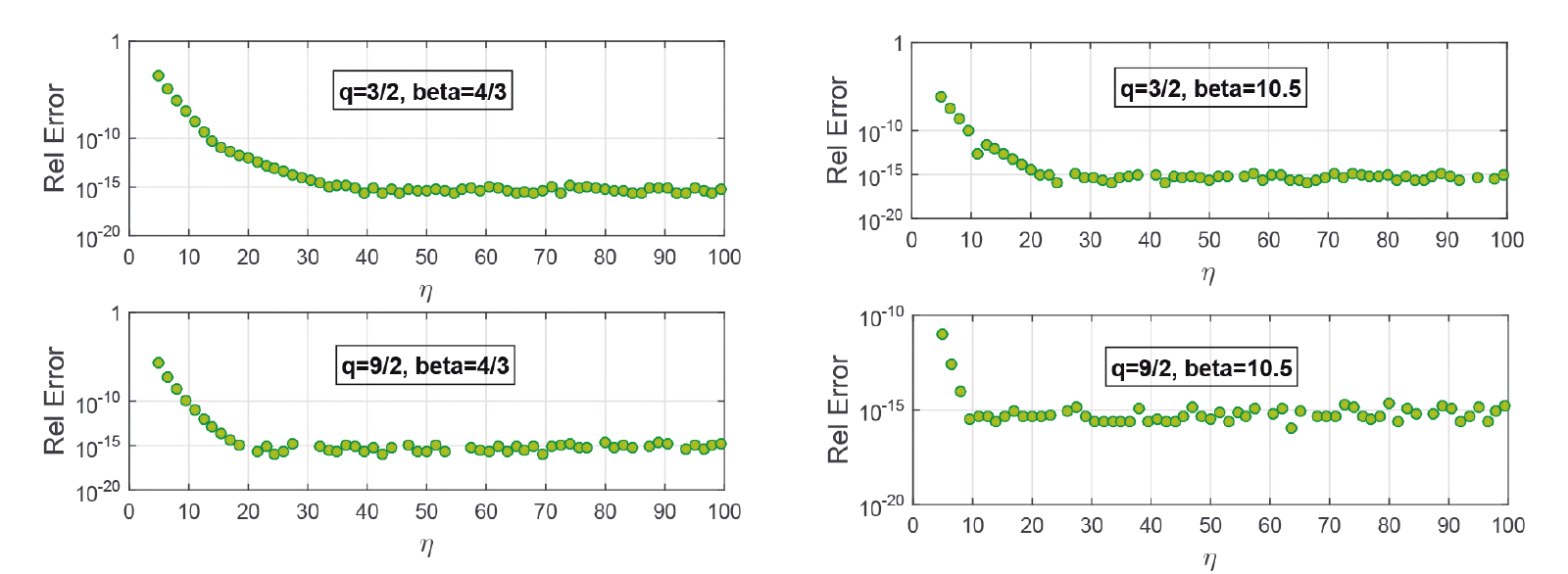}
\caption{
\label{fig:fig:fig05} Test of  \eqref{eq:qh08} by using the expansions  \eqref{eq:qh09},
the series  \eqref{eq:qh10} and the convergent expansion \eqref{eq:qh11} for $\beta=4/3,\,10.5$ and $q=3/2,\,9/2$.}
\end{center}
\end{figure}

For $\beta$ large and $q\ne \frac12,\frac32,\frac52,\ldots$, a first test 
for a couple of values of $\eta$ ($\eta=1.6,\,10.5$) and $q$ ($q=1.2,\,10.3$)
is shown in Figure \ref{fig:fig:fig06}, 
 where we consider the expression 
\eqref{eq:beta03} using the expansions given in \eqref{eq:beta04}.
We sum terms up to $k= 5$ in the expansions. It is interesting to note that for $q=1.2$ the auxiliary function $\Phi_k^{(2)}(\eta,q)$ in the series in 
\eqref{eq:beta04} has to be computed by using the function $\widehat{F}_q(\eta)$ defined in \eqref{eq:beta08} with negative $q$. Therefore, this is an example where
the result given \cite[\S24.7c]{Cox:1968:PSS} is incomplete. For details about $\widehat{F}_q(\eta)$ we refer to the Appendix; see also Remark~\ref{rem:rembeta} about the incomplete result in \cite[\S24.7c]{Cox:1968:PSS}.
As can be seen in the figure, when $\eta=20.3$ an accuracy near double precision is obtained for $\eta >20$.

Another test is shown in Table~\ref{tab:tab01}, where we show (for $\beta=50,\,100$)
the relative errors in the approximations when we take different number of terms in the expansions.
We take $q=2.4$, $\eta=9/2$ in the calculations.
As shown in the table,  for the two values of $\beta$ it is possible  to obtain an accuracy better than single precision ($10^{-8}$) in the computation with 
four terms in the expansion. 
Finally, similar tests for \eqref{eq:beta05} by using the expansions \eqref{eq:beta06} (case of $\beta$ large and $q$ half-integer) are shown in Figure~\ref{fig:fig:fig07} and Table~\ref{tab:tab02}. The results given in this table show that, just one term is needed to obtain an accuracy better than single precision.

\begin{figure}
\begin{center}
\includegraphics[width=14.5cm]{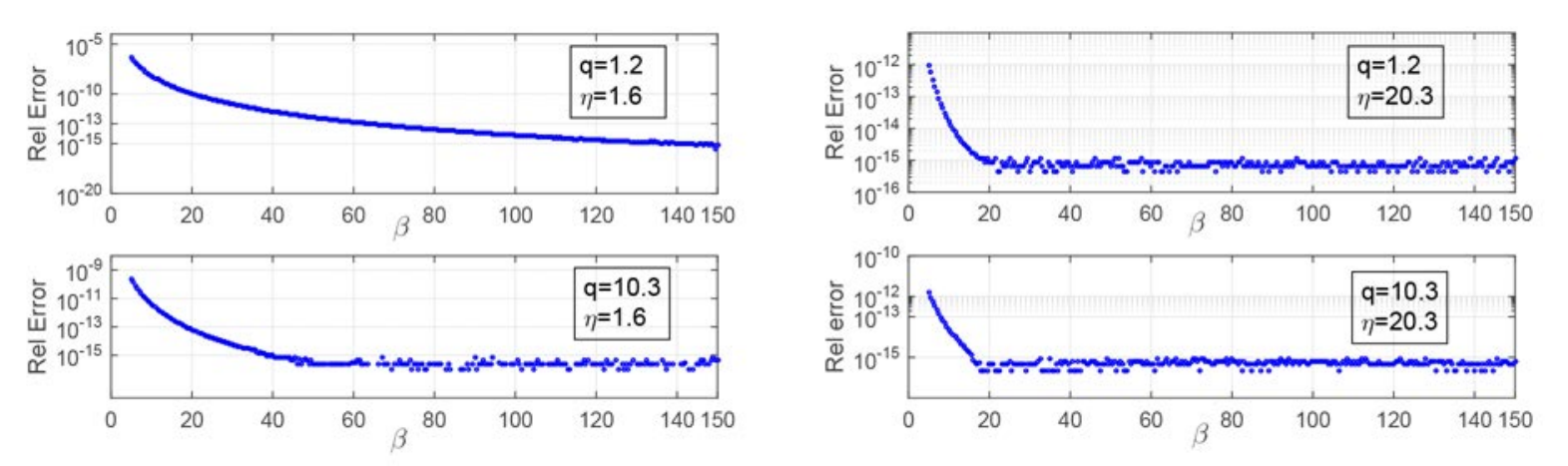}
\caption{
\label{fig:fig:fig06} Test of \eqref{eq:beta03} by using the expansions \eqref{eq:beta04}  for $\eta=1.6,\,10.5$ and $q=1.2,\,10.3$.}
\end{center}
\end{figure}

\renewcommand{\arraystretch}{1.2}
\begin{table}
\caption{
Relative errors in the computation of the relativistic Fermi-Dirac integral by using 
\eqref{eq:beta03} with the expansions \eqref{eq:beta04} evaluated with different number of terms ($n_{terms}=k+1$).
We take $q=2.4$, $\eta=9/2$.
\label{tab:tab01}}
$$
\begin{array}{llll}
k & \beta=50  & \beta=100 \quad \\
\hline
0  &   4.2 \times 10^{-3}  &   2.1\times 10^{-3}  \\
1  &   1.1 \times 10^{-5}  &   2.8\times 10^{-5}  \\
2  &   9.8 \times 10^{-8}  &   1.2\times 10^{-8}   \\
3  &   4.7 \times 10^{-9}  &   2.9 \times 10^{-10}   \\
4  &   1.7 \times 10^{-12}  &  5.3 \times 10^{-14}  \\
5  &   8.5 \times 10^{-15}  &   2.2\times 10^{-16}  \\
\hline
\end{array}
$$
\end{table}
\renewcommand{\arraystretch}{1.0}

\begin{figure}
\begin{center}
\includegraphics[width=14.5cm]{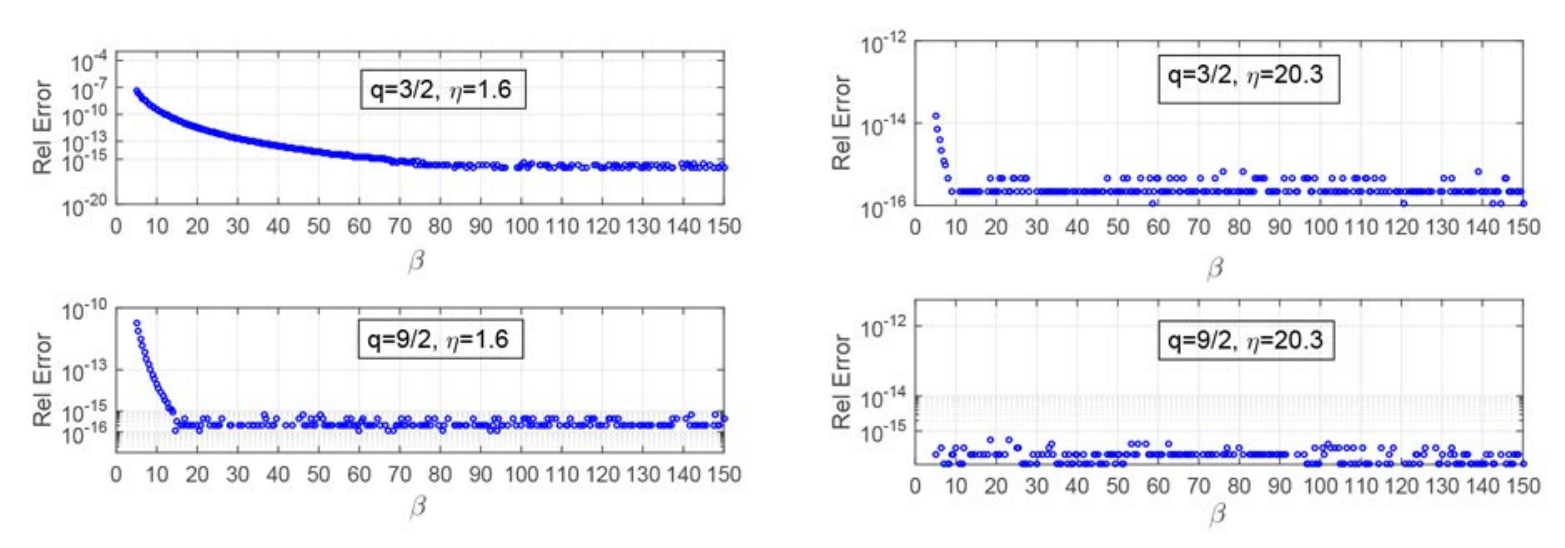}
\caption{
\label{fig:fig:fig07} Test of \eqref{eq:beta05} by using the expansions \eqref{eq:beta06}  for $\eta=1.6,\,20.3$ and $q=3/2,\,9/2$.}
\end{center}
\end{figure}

\renewcommand{\arraystretch}{1.2}
\begin{table}
\caption{
Relative errors in the computation of the relativistic Fermi-Dirac integral by using 
\eqref{eq:beta05} with the expansions \eqref{eq:beta06}
evaluated with different number of terms ($n_{terms}=k+1$).
We take $q=3/2$, $\eta=9/2$.
\label{tab:tab02}}
$$
\begin{array}{llll}
k_{max} & \beta=20  & \beta=50 \quad \\
\hline
0  &   2.5 \times 10^{-8}  &   6.7\times 10^{-10}  \\
1  &   3.1 \times 10^{-10}  &   3.5\times 10^{-12}  \\
2  &   4.6 \times 10^{-12}  &   2.2\times 10^{-14}   \\
3  &   5.9 \times 10^{-14}  &   2.2 \times 10^{-16}   \\
4  &   8.9 \times 10^{-16}  &  4.4 \times 10^{-16}  \\
5  &   2.2 \times 10^{-16}  &   4.4\times 10^{-16}  \\
\hline
\end{array}
$$
\end{table}
\renewcommand{\arraystretch}{1.0}

\section{Concluding remarks}\label{sec:conclude}
We have derived new and complete asymptotic expansions of the relativistic Fermi-Dirac integral $F_q(\eta,\beta)$ for large values of $\eta$ or $\beta$. The expansions have a different form for half-integer values of $q$, a case that is very relevant in applications from physics. By concentrating on delivering full expansions, we have discovered and repaired an omission in the literature for the case that asymptotic expansions containing the standard Fermi-Dirac function $F_q(\eta)$ with $q\le -1$ could not be handled appropriately.

\appendix
\section{Interpretations of \protectbold{\widehat{F}_q(\eta)} for \protectbold{q\le -1}}\label{sec:aux}
In  the relations for the auxiliary functions given in \eqref{eq:beta08} functions $\widehat{F}_q(\eta)$ occur with  $q\le -1$. One possible interpretation is based on integration by parts. We have 
\begin{equation}\label{eq:app01}
\widehat{F}_q(\eta)=\frac{1}{\Gamma(q+2)}\int_0^\infty \frac{dx^{q+1}}{e^{x-\eta}+1}=
\frac{1}{\Gamma(q+2)}\int_0^\infty x^{q+1} \frac{e^{x-\eta}}{\left(e^{x-\eta}+1\right)^2}\,dx,
\end{equation}
because the integrated terms vanish. The new integral converges at $\infty$ in the same manner as the original one and the right-hand side is  defined for $\Re q>-2$. Continuing this we conclude that $\widehat{F}_q(\eta)$ can be defined for all complex values of $q$.  For $\widehat{F}_q(\eta,\beta)$ the same approach can be used.

A different method is based on writing the integral as a loop integral around the positive axis. We have
\begin{equation}\label{eq:app02}
\widehat{F}_q(\eta)=-e^{-\pi iq}\frac{\Gamma(-q)}{2\pi i} \int_{+\infty}^{(0+)} \frac{z^q}{e^{z-\eta}+1}\,dz,\quad q\ne 0,1,2,\ldots,
\end{equation}
where the contour of integration starts at $+\infty$ with $\ph\,z=0$, encircles the origin in the anti-clockwise direction, and returns to $+\infty$ with $\phase\,z=2\pi$. The contour cuts the negative axis, where $\ph\,z=\pi$, and should not contain the poles $z_k=\eta+k\pi i$, $k\in\ZZ$ of $1/(e^{z-\eta}+1)$. A similar representation can  be given for $\widehat{F}_q(\eta,\beta)$; in that case the contour should cut the negative axis inside the interval $(-2/\beta,0)$.

We prove this integral representation first for $\Re q>-1$. In that case the singularity at the origin is integrable, and we can take the contour along the positive real axis, with the proper choice of the phase of $z$. This gives
\begin{equation}\label{eq:app03}
-e^{-\pi iq}\frac{\Gamma(-q)}{2\pi i} \int_{+\infty}^{(0+)} \frac{(-z)^q}{e^{z-\eta}+1}\,dz=
-e^{-\pi iq}\frac{\Gamma(-q)}{2\pi i} \left(e^{2\pi iq}-1\right)\int_0^{+\infty} \frac{z^q}{e^{z-\eta}+1}\,dz.
\end{equation}
By using the definition of $\widehat{F}_q(\eta)$ in \eqref{eq:beta09}, this can be written as
\begin{equation}\label{eq:app04}
-e^{-\pi iq}\frac{\Gamma(-q)}{2\pi i} \left(e^{2\pi iq}-1\right)\Gamma(q+1)\widehat{F}_q(\eta)=\widehat{F}_q(\eta),
\end{equation}
where we have used  $\Gamma(1-q)\Gamma(1+q)=\pi q/\sin(\pi q)$.

In this way, we have shown that $\widehat{F}_q(\eta)$ has the integral representation in \eqref{eq:app02} for $\Re q>-1$. However, the right-hand side of \eqref{eq:app02}  is an analytic function for all $\Re q<0$ and $\widehat{F}_q(\eta)$ is an analytic function for all $q\in\CC$, as follows from the second integral in \eqref{eq:beta09}. Hence, using the principle of analytic continuation, we conclude that \eqref{eq:app02}  holds for all $q$ with the exception of the nonnegative integers.

When we use one of these  these forms of  analytic continuation the function $\widehat{F}_q(\eta)$  can be used for the auxiliary functions as  in \eqref{eq:beta08}.

When $\eta>0$,  the functions $\Phi^{(1)}_k(\eta)$ defined in \eqref{eq:beta04} can be evaluated  shifting the contour across the poles at $s=0,-1,-2,\ldots$. For example,
\begin{equation}\label{eq:app05}
\Phi^{(1)}_0(\eta)=\sum_{n=0}^\infty (-1)^n e^{-n\eta}=\frac{1}{e^{-\eta}+1}.
\end{equation}
The same result can be obtained for $\eta<0$ by shifting the contour in \eqref{eq:beta04} to the right.  The  functions $\Phi^{(1)}_{k}(\eta)$, $k\ge1$ also follow from 
\begin{equation}\label{eq:app06}
\Phi^{(1)}_{k+1}(\eta)=\frac{d}{d\eta}\Phi^{(1)}_{k}(\eta),\quad k=0,1,2,\ldots.
\end{equation}

The relation in \eqref{eq:beta08}  for $\Psi_k(\eta)$ defined in \eqref{eq:beta07} easily follows from  \eqref{eq:prop16}.

\subsection{Further details on the evaluation of the auxiliary functions}\label{sec:further}

We give more details on the function $\widehat{F}_q(\eta)$  for $q\le-1$. As we have observed, for $q \le-1$  the numerical evaluation of $\widehat{F}_q(\eta)$ can be based on the integration by parts method (repeatedly applied), as shown in \eqref{eq:app01}. 
Here we suggest a numerical quadrature method. We split up the contour of integration in \eqref{eq:app02}, writing
\begin{equation}\label{eq:app07}
\begin{array}{r@{\,}c@{\,}l}
\widehat{F}_q(\eta)&=&\widehat{F}_q^{(1)}(\eta)+\widehat{F}_q^{(2)}(\eta),\\[8pt]
\widehat{F}_q^{(1)}(\eta)&=&\dsp{\frac{1}{\Gamma(q+1)}\int_0^1\frac{x^q}{e^{x-\eta}+1}\,dx=-e^{-\pi iq}\frac{\Gamma(-q)}{2\pi i} \int_{1}^{(0+)} \frac{z^q} {e^{z-\eta}+1}\,dz,}\\[8pt]
\widehat{F}_q^{(2)}(\eta)&=&\dsp{\frac{1}{\Gamma(q+1)}\int_1^\infty\frac{x^q}{e^{x-\eta}+1}\,dx.}
\end{array}\end{equation}
$\widehat{F}_q^{(2)}(\eta)$ can be computed by numerical quadrature for all complex $q$, and vanishes for $q=-1,-2,-3,\ldots$.  Details on the path of integration in the $z$-integral of $\widehat{F}_q^{(1)}(\eta)$ are similar as in \eqref{eq:app02}. Because there are no singularities inside the contour when $q=0,1,2,\ldots$,  the integral vanishes for these values of $q$, but the product with $\Gamma(-q)$ gives for these values the first  integral of $\widehat{F}_q^{(1)}(\eta)$ in \eqref{eq:app07}. 

In particular for $\Re q \le-1$ we consider  the second integral of $\widehat{F}_k^{(1)}(\eta)$  in \eqref{eq:app07}. We use the substitution $z=e^{i\theta}$. This gives, with $\mu=q+1$,
\begin{equation}\label{eq:app08}
\widehat{F}_q^{(1)}(\eta)=e^{-\pi i(q+1)}\frac{\Gamma(-q)}{2\pi } \int_{0}^{2\pi} \frac{e^{i(q+1)\theta}}{e^{i\theta-\eta}+1}\,d\theta=
\frac{\Gamma(-q)}{2\pi } \int_{-\pi}^{\pi} \frac{e^{i\mu\theta}}{e^{-e^{i\theta}-\eta}+1}\,d\theta.
\end{equation}
After algebraic manipulations we find that the real part of the integrand is even and the imaginary part is odd, and we obtain
\begin{equation}\label{eq:app09}
\widehat{F}_q^{(1)}(\eta)=
\frac{\Gamma(-q)}{2\pi } \int_{-\pi}^{\pi}f(\theta)\,d\theta=\frac{\Gamma(-q)}{\pi } \int_{0}^{\pi}f(\theta)\,d\theta,
\end{equation}
where 
\begin{equation}\label{eq:app10}
f(\theta)=\frac{e^{-\eta - \cos(\theta)}\cos\bigl(\mu\theta+\sin(\theta)\bigr) + \cos(\mu\theta)}{1 + 2e^{-\eta - \cos(\theta)}\cos\bigl(\sin(\theta)\bigr)+ e^{-2\eta - 2\cos(\theta)}},\quad \mu=q+1.
\end{equation}
As remarked earlier, the integrals in \eqref{eq:app08} and \eqref{eq:app09} vanish when $q=0,1,2,\ldots$, that is, when $\mu=1,2,3,\ldots$. However, in that case,  the first integral in \eqref{eq:app07} can be used 
for $\widehat{F}_q^{(1)}(\eta)$.

We observe that the integral  of $\widehat{F}_q^{(1)}(\eta)$ in \eqref{eq:app09} is quite convenient for large positive values of $\eta$. For large negative values we can multiply the numerator and denominator by $e^{2\eta}$ to obtain again a convenient representation.

For the function $\Psi_k(\eta)$ defined in \eqref{eq:beta07} we can use a similar numerical algorithm using the relation of \eqref{eq:beta08} and writing
\begin{equation}\label{eq:app11}
 \Psi_k(\eta)=\Psi_k^{(1)}(\eta)+\Psi_k^{(2)}(\eta),\quad \Psi_k^{(j)}(\eta)=-\left. \frac{\partial}{\partial q}\widehat{F}_q^{(j)}(\eta)\right\vert_{q=-k-1},\quad j=1,2.
\end{equation}
We have, using the integral representation in \eqref{eq:app09},
\begin{equation}\label{eq:app12}
\Psi_k^{(1)}(\eta)=\frac{k!}{\pi }\left(\psi(k+1)\int_{0}^{\pi}f(\theta)\,d\theta-\int_{0}^{\pi}g(\theta)\,d\theta\right),
\end{equation}
where
\begin{equation}\label{eq:app13}
g(\theta)=-\theta\frac{e^{-\eta - \cos(\theta)}\sin\bigl(\mu\theta+\sin(\theta)\bigr) + \sin(\mu\theta)}{1 + 2e^{-\eta - \cos(\theta)}\cos\bigl(\sin(\theta)\bigr)+ e^{-2\eta - 2\cos(\theta)}}.
\end{equation}
For $\Psi_k^{(2)}(\eta)$ we use \eqref{eq:app11} and the third line in \eqref{eq:app07}. First we evaluate
\begin{equation}\label{eq:app14}
\frac{d}{dq}\frac{1}{\Gamma(q+1)}=-\frac{1}{\pi}\frac{d}{dq}\Bigl(\sin(\pi q)\Gamma(-q)\Bigr)=(-1)^k k!,\quad q=-k-1.
\end{equation}
Then we have
\begin{equation}\label{eq:app15}
\Psi_k^{(2)}(\eta)=(-1)^{k+1} k!\,\int_1^\infty\frac{x^{-k-1}}{e^{x-\eta}+1}\,dx,\quad k=0,1,2,\ldots.
\end{equation}

\section*{Acknowledgments}

We acknowledge financial support from Ministerio de Ciencia e Innovaci\'on, Spain, 
project PGC2018-098279-B-I00 (MCIU/AEI/FEDER, UE). NMT thanks CWI for scientific support.

\end{document}